\theoremstyle{plain}
\newtheorem{theorem}{Theorem}[section]
\newtheorem{proposition}[theorem]{Proposition}
\newtheorem{lemma}[theorem]{Lemma}
\newtheorem{corollary}[theorem]{Corollary}
\theoremstyle{definition}
\newtheorem{definition}[theorem]{Definition}
\newcommand{\m}{{\scriptscriptstyle-}}
\newcommand{\p}{{\scriptscriptstyle+}}
\newcommand{\pp}{{\scriptscriptstyle++}}
\newcommand{\tp}{{\scriptscriptstyle\mathsf{T}}}
\newcommand{\F}{{\scriptscriptstyle\mathsf{F}}}
\let\O\undefined
\DeclareMathOperator{\O}{O}
\DeclareMathOperator{\V}{V}
\DeclareMathOperator{\Sym}{\mathsf{S}}
\DeclareMathOperator{\tr}{tr}
\DeclareMathOperator{\Gr}{Gr}
\DeclareMathOperator{\diag}{diag}
\DeclareMathOperator{\ED}{\mathsf{ED}}
\DeclareMathOperator{\Flag}{Flag}
\DeclareMathOperator{\spn}{span}
\begin{document}
\title{Euclidean distance degree in manifold optimization}
\author[Z.~Lai]{Zehua~Lai}
\address{Department of Mathematics, University of Texas, Austin, TX 78712}
\email{zehua.lai@austin.utexas.edu}
\author[L.-H.~Lim]{Lek-Heng~Lim}
\address{Computational and Applied Mathematics Initiative, Department of Statistics,
University of Chicago, Chicago, IL 60637-1514.}
\email{lekheng@uchicago.edu}
\author[K.~Ye]{Ke Ye}
\address{KLMM, Academy of Mathematics and Systems Science, Chinese Academy of Sciences, Beijing 100190, China}
\email{keyk@amss.ac.cn}

\begin{abstract}
We determine the Euclidean distance degrees of the three most common manifolds arising in manifold optimization: flag, Grassmann, and Stiefel manifolds. For the Grassmannian, we will also determine the Euclidean distance degree of an important class of Schubert varieties that often appear in applications. Our technique goes further than furnishing the value of the Euclidean distance degree; it will also yield closed-form expressions for all stationary points of the Euclidean distance function in each instance. We will discuss the implications of these results on the tractability of manifold optimization problems.
\end{abstract}

\maketitle

\section{Euclidean distance degree}\label{sec:ed}

Let $\mathcal{M} \subseteq \mathbb{R}^{m \times n}$ be a  real affine variety, i.e., a set defined by polynomial equations $\mathcal{M} = \{ X \in \mathbb{R}^{m \times n} : f_1(X) = 0,\dots, f_k(X) = 0 \}$ with $f_1,\dots,f_k \in \mathbb{R}[x_{11}, x_{12}, \dots, x_{mn} ]$. Its \emph{complex locus} is the corresponding complex affine variety in $\mathbb{C}^{m \times n}$ defined by the same equations:
\[
\mathcal{M}^{\mathbb{C}} \coloneqq \{ X \in \mathbb{C}^{m \times n} : f_1(X) = 0,\dots, f_k(X) = 0 \}.
\]
In other words we add to $\mathcal{M}$ the complex solutions of its defining equations. 
The  \emph{Euclidean distance degree} \cite{DHOST14, DHOST16} of $\mathcal{M}$, denoted  $\ED(\mathcal{M})$,  is  the number of stationary points of the \emph{Euclidean distance function}
\begin{equation}\label{eq:ed}
\delta_A : \mathcal{M}^{\mathbb{C}} \to \mathbb{C}, \quad X \mapsto \tfrac12 \lVert X - A \rVert^2,
\end{equation}
for a generic $A \in \mathbb{R}^{m \times n}$, where
\[
\lVert X \rVert^2 \coloneqq \sum_{i = 1}^m \sum_{j = 1}^n x_{ij}^2 
\]
is the complexified Frobenius norm (squared). This notation is chosen merely to remind us that it agrees with the Frobenius norm
\[
\lVert X \rVert^2_\F = \sum_{i = 1}^m \sum_{j = 1}^n \lvert x_{ij}  \rvert^2
\]
when restricted to $\mathbb{R}^{m \times n}$ but $\lVert \, \cdot \, \rVert^2$ is a complex-valued polynomial function and $\lVert \, \cdot \, \rVert$ is not a norm on $\mathbb{C}^{m \times n}$. Herein lies an important rationale: By limiting ourselves to polynomials and by working over an algebraically closed field $\mathbb{C}$, we allow for the use of algebraic geometric techniques in the study of Euclidean distance degree. An optimization theorist would invariably prefer the number of stationary points in $\mathcal{M} \subseteq \mathbb{R}^{m \times n}$ with the actual Frobenius norm $\lVert \, \cdot \, \rVert^2_\F$.
But $\mathbb{R}$ is not algebraically closed and $\lVert \, \cdot \, \rVert^2_\F$ is not a polynomial.  Substituting $\lVert \, \cdot \, \rVert^2_\F$ with $\lVert \, \cdot \, \rVert^2$ and $\mathcal{M} $ with $\mathcal{M}^{\mathbb{C}}$ is a compromise. However, it gives one important piece of information for what an optimization theorist is interested in, namely, the Euclidean distance degree serves as an \emph{upper bound} for the number of stationary points in $\mathcal{M} $ with respect to the actual Frobenius norm, noting that $\lVert \, \cdot \, \rVert^2$ and $\lVert \, \cdot \, \rVert^2_\F$ take the same value on $\mathbb{R}^{m \times n}$. The requirement to work over $\mathbb{C}$ also explains why the notion is limited to stationary points, which are defined for any complex-valued functions, and not local or global optimizers, which are only defined for real-valued functions.

The Euclidean distance degree has become an increasingly popular tool in optimization theory \cite{optim1, optim2, optim3, optim4, optim5}. It was first proposed in \cite{DHOST14, DHOST16} to capture the computational complexity of a geometric object $\mathcal{M}$ embedded in an ambient Euclidean space, usually $\mathbb{R}^m$ or $\mathbb{R}^{m \times n}$ or a subspace.  The number of stationary points affects the computational complexity of an optimization problem as one way to find global optimizers of $f : \mathcal{M} \to \mathbb{R}$ is simply through brute force: Find all stationary points and then compare their values to identify the largest/smallest. There are two difficulties with this approach: Firstly, it depends not just on $\mathcal{M}$ but also the choice of  $f$. Secondly, it requires us to ascertain that we have found all stationary points. Euclidean distance degree may be viewed as a way around these difficulties.

To obtain a complexity measure for $\mathcal{M}$ independent of the objective function $f$, we choose the Euclidean distance function $\delta_A$ for a generic $A$  as a convenient stand-in for $f$. This is the default choice in pure mathematics, specifically Morse Theory \cite{Milnor}, and thoroughly understood. Furthermore, it is broadly applicable to different $\mathcal{M}$'s embedded differently in different ambient spaces. So by using the Euclidean distance function as a common yardstick, we may compare the tractability of optimizing over different $\mathcal{M}$'s, or the same $\mathcal{M}$ embedded in different ambient spaces, or the same $\mathcal{M}$ embedded in different ways in the same ambient space.

The Euclidean distance degree $\ED(\mathcal{M})$ then provides a certificate of exhaustiveness for this approach, i.e., we have found all stationary points of $\delta_A$. In addition, $\ED(\mathcal{M})$ serves as a measure of tractability: If there are exponentially many stationary points, then the brute force approach would be prohibitively expensive. 
 
Indeed one goal of our article is to use Euclidean distance degree to provide a different perspective on our intractability results in \cite{ZLK24}, in which we showed that many innocuous-looking manifold optimization problems are NP-hard. Whereas we relied on worst-case time complexity to quantify tractability in \cite{ZLK24}, here we will use Euclidean distance degree. As we will see, the Euclidean distance degrees of the flag, Grassmann, and Stiefel manifolds are all generally exponentially large. This informs us, in a way different from \cite{ZLK24}, that manifold optimization problems are expected to be hard.

\subsection*{Prior works and contributions}  Since its first appearance in \cite{DHOST14}, the Euclidean distance degree has been studied in numerous works --- see \cite{AH, BD1, BD2, BSW, DLO17, Lee17, MRW1, MRW2, MRW3, MT24, OSV} and the references therein. Here we study it for three of the most common $\mathcal{M}$ arising in manifold optimization \cite{EAS,AMS}: the flag, Grassmann, and Stiefel manifolds embedded in  $\mathbb{R}^{m \times n}$ as submanifolds. We will also be interested in certain important submanifolds of the Grassmann manifold called Schubert varieties.

The best-known existing results on Euclidean distance degrees apply to  projective varieties \cite{AH, MRW3,OSV}, groups \cite{BD1}, and hypersurfaces \cite{Lee17}. None of which apply to our submanifold models for flag, Grassmann, and Stiefel manifolds (see Section~\ref{sec:man}) or Schubert varieties (see Section~\ref{sec:Gr}), which are all affine varieties, have no group structure, and have high codimensions, aside from two special cases discussed in Section~\ref{sec:St}. We cannot apply the ``transfer principle'' in \cite{DLO17} as to apply it we need $\O_n(\mathbb{R}) \times \O_n(\mathbb{R})$-invariance in Sections~\ref{sec:Fl} and \ref{sec:Gr} but all models therein lack this property; and we need $\O_n(\mathbb{R}) \times \O_k(\mathbb{R})$-invariance in Section~\ref{sec:St} but that only holds for the two aforementioned special cases.  The formula in \cite[Theorem~1.3]{MRW2} involves the Euler--Poincar\'{e} characteristic of a hypersurface complement, which is all but impossible to compute in situations as complex as the manifolds in this article. Similarly, the sophisticated techniques in \cite{BD2} involves representation theory and geometric invariant theory, but our calculation of Euclidean distance degrees in this article will all be based on standard linear algebra. 

\subsection*{Notations and conventions}  Although we are only interested in the flag, Grassmann, and Stiefel manifolds over $\mathbb{R}$ in this article, the nature of Euclidean distance degree requires working with their complex loci too. Our notations are chosen to reflect this field dependence. For $\mathbb{F} = \mathbb{R}$ or $\mathbb{C}$, we write $\O_n(\mathbb{F}) \coloneqq \{ X \in \mathbb{F}^{n \times n} : X^\tp X = I \}$ for the orthogonal group over $\mathbb{F}$, and $\Sym^2(\mathbb{F}^n) \coloneqq \{ X \in \mathbb{F}^{n \times n} : X^\tp = X \}$ for the subspace of  symmetric matrices in $\mathbb{F}^{n \times n}$. We stress that when $\mathbb{F} = \mathbb{C}$, we really do mean complex orthogonal and complex symmetric matrices, not unitary and Hermitian matrices; in particular, these are the complex loci of their real counterparts:
\[
\O_n(\mathbb{R})^{\mathbb{C}} = \O_n(\mathbb{C}), \qquad  \Sym^2(\mathbb{R}^n)^{\mathbb{C}} = \Sym^2(\mathbb{C}^n).
\]
We write $\mathfrak{S}_n$ for the symmetric group of permutations on $n$ elements and $\mathbb{S}^n \coloneqq \{ x \in \mathbb{R}^{n+1} : \lVert x \rVert = 1\} $ for the unit sphere in $\mathbb{R}^{n+1}$.

We write $\mathbb{T}_X \mathcal{M}$ for the tangent space at $X \in \mathcal{M}$. An embedded manifold $\mathcal{M} \subseteq \mathbb{R}^{m \times n}$ has a \emph{normal space} at $X \in \mathbb{M}$ too, denoted and given by
\[
\mathbb{N}_X \mathcal{M} \coloneqq \{ Z \in \mathbb{R}^{m \times n} : \tr(Y^\tp Z) = 0 \text{ for all } Y \in \mathbb{T}_X \mathcal{M}\}.
\]

Given that our context is that of manifold optimization, where points on manifolds are represented as matrices, we have assumed that our ambient space is that of $m \times n$ matrices $\mathbb{R}^{m \times n}$ or $n \times n$ symmetric matrices  $\Sym^2(\mathbb{R}^n)$. This is just a cosmetic difference: The more common ambient space $\mathbb{R}^m = \mathbb{R}^{m \times 1}$ is just the special case $n =1$ and conversely $\mathbb{R}^{m \times n} \cong \mathbb{R}^{mn}$ and  $\Sym^2(\mathbb{R}^n) \cong \mathbb{R}^{n(n+1)/2}$.

\section{Manifold optimization}\label{sec:man}

By their standard definitions, the flag manifold $\Flag(k_1,\dots,  k_p, \mathbb{R}^n)$ is the set of nested subspaces of dimensions $k_1 < \dots < k_p$ in $\mathbb{R}^n$, the Grassmann manifold $\Gr(k, \mathbb{R}^n)$ the set of $k$-dimensional subspaces in $\mathbb{R}^n$, and the Stiefel manifold $\V(k, \mathbb{R}^n)$ a set of orthonormal $k$-frames in $\mathbb{R}^n$. Nevertheless such an abstract set-theoretic description is of little use in manifold optimization, where it is standard practice to model these manifolds concretely as either quotient manifolds or submanifolds of matrices \cite[Tables~1 and 2]{ZLK24}. Clearly, Euclidean distance degree does not apply to the former but only to the latter and henceforth by a \emph{model} of the flag, Grassmann, or Stiefel manifolds, we mean a submanifold of $\mathbb{R}^{m \times n}$ or $\Sym^2(\mathbb{R}^n)$ diffeomorphic to it.  There is one omission from our list of manifolds --- the Cartan manifold of ellipsoids in $\mathbb{R}^n$ centered at the origin, usually modeled as the set of positive definite matrices $\Sym^2_\pp(\mathbb{R}^n)$ equipped with the Riemannian metric $g_B(X, Y) = \tr(B^{-1} X B^{-1} Y)$, $ B \in \Sym^2_\pp (\mathbb{R}^n)$ \cite[Section~8]{ZLK24}. The reason for this omission is that the notion of Euclidean distance degree does not apply since $\Sym^2_\pp (\mathbb{R}^n)$ is not an affine variety in $\Sym^2(\mathbb{R}^n)$.

It has recently been shown \cite{LK24a} that any equivariant model of the flag, Grassmann, or Stiefel manifold of minimal dimension must necessarily be one of the following:
\begin{enumerate}[\upshape (i)]
\item\label{it:iso} an \emph{isospectral model} of the flag manifold $\Flag(k_1,\dots,  k_p, \mathbb{R}^n)$ is a submanifold of $\Sym^2(\mathbb{R}^n) $ given by
\begin{align*}
\Flag_{b_1,\dots,  b_{p+1}}(k_1,\dots,  k_p, n)  &\coloneqq
\biggl\lbrace
X \in \Sym^2(\mathbb{R}^n) : \prod_{j=0}^p (X - b_{j+1} I_n) = 0,  \; \tr(X) = \sum_{j=0}^p (k_{j+1} - k_j) b_{j+1}
\biggr\rbrace \\
&= 
\left\lbrace
V \begin{bsmallmatrix}
b_1 I_{k_1} & 0 & \cdots & 0 \\
0 & b_2 I_{k_2 - k_1} & \cdots & 0 \\
\vdots & \vdots & \ddots & \vdots \\
0 & 0 & \cdots & b_{p+1} (n- k_p) \\
\end{bsmallmatrix} V^\tp \in \Sym^2(\mathbb{R}^n) : V \in \O_n(\mathbb{R})
\right\rbrace
\end{align*}
for some distinct $b_1,\dots,b_{p+1} \in \mathbb{R}$;

\item\label{it:quad} a \emph{quadratic model} of the Grassmann manifold $\Gr(k, \mathbb{R}^n)$ is a submanifold  of $\Sym^2(\mathbb{R}^n) $ given by
\begin{align*}
\Gr_{a,b}(k,n) &\coloneqq
\bigl\lbrace
X \in \Sym^2(\mathbb{R}^n)  : (X - aI) (X - bI) = 0,  \; \tr(X) = ak + b(n-k)
\bigr\rbrace \\
&= 
\biggl\lbrace
V \begin{bmatrix}
a & 0 \\
0 & b
\end{bmatrix} V^\tp \in \Sym^2(\mathbb{R}^n) : V \in \O_n(\mathbb{R})
\biggr\rbrace
\end{align*}
for some distinct $a,b \in \mathbb{R}$;

\item\label{it:chol} a \emph{Cholesky model} of the Stiefel manifold $\V(k, \mathbb{R}^n)$ is a submanifold
\[
\V_{\!B}(k,n) \coloneqq \{ X \in \mathbb{R}^{n \times k} : X^\tp X = B \}
\]
for some $B \in \Sym^2_\pp (\mathbb{R}^k)$.
\end{enumerate}
Evidently, just as the Grassmann manifold is a special case of the flag manifold with $p =1$, the quadratic model is a special case of the isospectral model with $p = 1$. We use $\Sym^2(\mathbb{R}^n)$ as our ambient space in \eqref{it:iso} and \eqref{it:quad} but it would not have made any difference to the Euclidean distance degree if we have instead used $\mathbb{R}^{n \times n}$ as ambient space: For a square matrix $A \in \mathbb{R}^{n \times n}$ and any $\mathcal{M} \subseteq \Sym^2(\mathbb{R}^n)$, the functions $\delta_A$ and $\delta_{(A + A^\tp)/2}$ only differ by a constant.

The models in \eqref{it:iso}, \eqref{it:quad}, and \eqref{it:chol} are smooth submanifolds diffeomorphic to $\Flag(k_1,\dots,  k_p, \mathbb{R}^n)$, $\Gr(k, \mathbb{R}^n)$, and $\V(k, \mathbb{R}^n)$ respectively. However, it is evident from their definitions that these submanifolds are also real affine varieties in their respective ambient spaces. In other words, the notion of Euclidean distance degree applies to all of \eqref{it:iso}, \eqref{it:quad}, and \eqref{it:chol}. The main contribution of this article is to determine them explicitly:
\begin{equation}\label{eq:main}
\begin{gathered}
\ED\bigl(\Flag_{b_1,\dots,  b_{p+1}}(k_1,\dots,  k_p, n) \bigr) = \binom{n}{k_1, k_2-k_1,\dots,  n-k_p}, \\
\ED\bigl(\Gr_{a,b}(k, n) \bigr) = \binom{n}{k}, \qquad
\ED\bigl(\V_{\!B}(k, n) \bigr) = 2^k.
\end{gathered}
\end{equation}
A noteworthy point is that these values are independent of the model parameters $b_1,\dots,  b_{p+1} \in \mathbb{R}$;  $a,b \in \mathbb{R}$; and $B \in \Sym^2_\pp (\mathbb{R}^k)$ in the respective models. As a side contribution, we define an analog of the quadratic model in \eqref{it:quad} for an important type of Schubert varieties and determine their Euclidean distance degrees. We postpone their definition and discussions to Section~\ref{sec:Gr}.

Any study of stationary points of functions on manifolds invariably invites for comparison with Morse Theory \cite{Milnor}. Indeed, the Euclidean distance function of a generic point to a manifold is the canonical example of a Morse function \cite[Theorem~6.6]{Milnor}, as we alluded to in Section~\ref{sec:ed}. But while Morse Theory applies to differential manifolds, Euclidean distance degree applies to algebraic varieties. In general they are incompatible but the models \eqref{it:iso}, \eqref{it:quad}, and \eqref{it:chol}, being simultaneously differential manifolds and algebraic varieties, provide natural examples to study their interaction. Given that there are already works \cite{AH, CDM25, MT24, MRW3} exploring the interaction between Morse Theory and Euclidean distance degree, we will not say more here.

\section{Euclidean distance degree of the flag manifold}\label{sec:Fl}

We begin with the isospectral model of the flag manifold $\Flag_{b_1,\dots,  b_{p+1}}(k_1,\dots,  k_p, n)$ in \eqref{it:iso} but we will see that its Euclidean distance degree is independent of all model parameters $b_1,\dots,b_{p+1} \in \mathbb{R}$.
\begin{proposition}\label{prop:real stationary points}
Let $A\in \Sym^2(\mathbb{R}^n)$ have distinct eigenvalues and let $A = Q \diag(a_1,\dots,  a_n) Q^\tp$ be an eigenvalue decomposition with $Q \in \O_n(\mathbb{R})$. Then  $X \in \Flag_{b_1,\dots,  b_{p+1}}(k_1,\dots,  k_p, n)$ is a stationary point of the Euclidean distance function $\delta_A$ if and only if 
\begin{equation}\label{eq:feig}
X = Q P_\tau  \diag(b_1 I_{k_1},  b_2  I_{k_2 - k_1},  \dots,  b_{p+1} I_{n - k_p}) P_\tau ^\tp Q^\tp
\end{equation}
where $P_\tau$ is the permutation matrix associated to $\tau \in \mathfrak{S}_n$. The number of distinct stationary points of $\delta_A$ is
\begin{equation}\label{eq:fcount}
\binom{n}{k_1, k_2-k_1,\dots,  n-k_p}.
\end{equation}
\end{proposition}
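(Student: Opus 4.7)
The plan is to use the submanifold structure of $\mathcal{M} \coloneqq \Flag_{b_1,\dots,b_{p+1}}(k_1,\dots,k_p,n)$ to translate stationarity of $\delta_A$ into a normal-space condition, and then exploit the distinct eigenvalues of $A$ to reduce the problem to a combinatorial count of ordered set partitions of $\{1,\dots,n\}$.

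First, I would parametrize $\mathcal{M}$ via $X = VDV^\tp$ with $V \in \O_n(\mathbb{R})$ and $D = \diag(b_1 I_{k_1}, b_2 I_{k_2-k_1}, \dots, b_{p+1} I_{n-k_p})$. Differentiating the curve $V \exp(tS) D \exp(-tS) V^\tp$ at $t = 0$ for skew-symmetric $S$ identifies the tangent space as $\mathbb{T}_X \mathcal{M} = \{V[S,D]V^\tp : S^\tp = -S\}$. Taking the Frobenius-orthogonal complement in $\Sym^2(\mathbb{R}^n)$ and using the identity $\tr(W[S,D]) = \tr(S[D,W])$, we find that $VWV^\tp \in \mathbb{N}_X \mathcal{M}$ iff the symmetric matrix $W$ commutes with $D$. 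Since $b_1,\dots,b_{p+1}$ are distinct, this commutant consists exactly of symmetric matrices that are block-diagonal with block sizes $k_1, k_2-k_1, \dots, n-k_p$.

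Second, I would write the stationarity condition $X - A \in \mathbb{N}_X \mathcal{M}$. Conjugating by $V^\tp$ turns this into: $V^\tp A V$ is block-diagonal with those same block sizes. Partitioning $V = [V_1 \mid \cdots \mid V_{p+1}]$ into column blocks accordingly, the condition $V_j^\tp A V_l = 0$ for $j \neq l$ says that each $U_j \coloneqq \spn(V_j)$ is $A$-invariant. Because $A$ has $n$ distinct eigenvalues with eigenvectors $q_1,\dots,q_n$ (the columns of $Q$), every $A$-invariant subspace is spanned by some subset of the $q_i$; hence each $U_j$ equals $\spn\{q_i : i \in I_j\}$ for an index set $I_j$ of size $k_j - k_{j-1}$ (with $k_0 = 0$ and $k_{p+1}=n$). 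The tuple $(I_1,\dots,I_{p+1})$ is an ordered partition of $\{1,\dots,n\}$, which corresponds bijectively to a left coset of the Young subgroup $\mathfrak{S}_{k_1} \times \mathfrak{S}_{k_2-k_1} \times \dots \times \mathfrak{S}_{n-k_p}$ in $\mathfrak{S}_n$; any coset representative $\tau$ yields the claimed form $X = QP_\tau D P_\tau^\tp Q^\tp$.

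Finally, I would verify that distinct cosets give distinct $X$ and then count. Since $V_j V_j^\tp$ is the orthogonal projector onto $U_j$, we have $X = \sum_{j=1}^{p+1} b_j V_j V_j^\tp$, and the distinctness of $b_1,\dots,b_{p+1}$ means the eigenspace decomposition of $X$ recovers the ordered tuple $(U_1,\dots,U_{p+1})$, so the coset-to-stationary-point map is injective. The total count is therefore the index $[\mathfrak{S}_n : \mathfrak{S}_{k_1}\times\cdots\times\mathfrak{S}_{n-k_p}] = \binom{n}{k_1, k_2-k_1, \dots, n-k_p}$. The main technical step I anticipate is the identification of the normal space as the $V$-conjugate of the commutant of $D$; once that is in place, the rest follows from the spectral theorem applied to $A$ together with a routine orbit-counting argument.
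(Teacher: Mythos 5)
Your proposal is correct and follows the same overall strategy as the paper's proof: stationarity is translated into the normal-space condition $X - A \in \mathbb{N}_X\Flag_{b_1,\dots,b_{p+1}}(k_1,\dots,k_p,n)$, the normal space at $X = VDV^\tp$ is identified with the $V$-conjugate of the symmetric commutant of $D$ (block-diagonal symmetric matrices, since the $b_j$ are distinct), and the condition becomes block-diagonality of $V^\tp A V$. The only divergence is in how you finish: the paper conjugates each diagonal block by an element of the stabilizer $\O_{k_1}(\mathbb{R})\times\cdots\times\O_{n-k_p}(\mathbb{R})$ of $D$ to diagonalize it and then concludes that $V$ must be a permutation matrix, whereas you read off from the vanishing off-diagonal blocks that each $\spn(V_j)$ is an $A$-invariant subspace and hence, because $A$ has simple spectrum, a span of eigenvectors $q_i$. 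The two finishes are equivalent in substance---both rest on the distinctness of the eigenvalues of $A$---but yours lands directly on the ordered-partition description, which makes injectivity and the count transparent (left cosets of the Young subgroup), and it sidesteps a small slip in the paper, whose description of the fibers of $\tau\mapsto P_\tau D P_\tau^\tp$ as $\tau_2=\rho\tau_1\rho^{-1}$ should read $\tau_2=\tau_1\rho$ with $\rho$ in the Young subgroup. Your explicit derivation of $\mathbb{T}_X$ and $\mathbb{N}_X$ via $\tr(W[S,D])=\tr([D,W]S)$ is also a worthwhile addition, since the paper uses the normal-space description without proof.
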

\begin{proof}
Since $\delta_{Q^\tp A Q}(X) = \delta_A (Q X Q^\tp)$ for any $Q\in \O_n(\mathbb{R})$, we may assume that $A = \diag(a_1,\dots, a_n)$. Observe that $X\in \Flag_{b_1,\dots,  b_{p+1}}(k_1,\dots,  k_p, n)$ is a stationary point of $\delta_A$ if and only if the Riemannian gradient of $\delta_A$ at $X$ vanishes,  which is equivalent to
\[
X - A \in \mathbb{N}_X \Flag_{b_1,\dots,  b_{p+1}}(k_1,\dots,  k_p, n).
\]
Using the second parameterization of the isospectral model in \eqref{it:iso}, we may write  $X = V D V^\tp$ with $D \coloneqq \diag(b_1 I_{k_1},  b_2  I_{k_2 - k_1},  \dots,  b_{p+1} I_{n - k_p})$.  Hence there exist $C_{j+1} \in \Sym^2( \mathbb{R}^{k_{j+1} - k_j} )$,  $j=0,\dots,p$,  such that 
\[
V D V^\tp - A = Q \begin{bmatrix}
C_1 & \cdots & 0  \\
\vdots & \ddots & \vdots \\
0 & \cdots &  C_{p+1}  \\
\end{bmatrix} Q^\tp,
\]
and so
\[
V^\tp A V  = 
\begin{bmatrix}
b_1I_{k_1} - C_1 & \cdots & 0  \\
\vdots & \ddots & \vdots \\
0 & \cdots &  b_{p+1} I_{n- k_p}-C_{p+1}  \\
\end{bmatrix}.
\]
As $D$ is invariant under conjugation by $\O_{k_1}(\mathbb{R}) \times \O_{k_2 - k_1}(\mathbb{R}) \times \dots \times \O_{n - k_p}(\mathbb{R})$, we may further assume that $C_1,\dots,C_{p+1}$ are all diagonal matrices. Given that the eigenvalues $a_1,\dots, a_n$ of $A$ are distinct, $V$ must therefore be a permutation matrix and 
\[
\begin{bmatrix}
C_1 & \cdots & 0  \\
\vdots & \ddots & \vdots \\
0 & \cdots &  C_{p+1}  \\
\end{bmatrix} = \begin{bmatrix}
b_1 I_{k_1} - \diag(a_{\tau(1)},\dots, a_{\tau(k_1)}) & \cdots  & 0 \\
\vdots & \ddots & \vdots \\
0 & \cdots & b_{p+1} I_{n-k_p} - \diag(a_{\tau(k_p+1)},\dots, a_{\tau(n)})
\end{bmatrix},
\]
where $\tau \in \mathfrak{S}_n$. It remains to count the number of such $X$'s. The image of the map 
\[
f: \mathfrak{S}_n \to  \Flag_{b_1,\dots,  b_{p+1}}(k_1,\dots,  k_p, n),\quad f(\tau) = P_\tau D P_\tau^\tp,
\] 
consists of all stationary points of $\delta_A$. Clearly $f(\tau_1) = f(\tau_2)$ if and only if $\tau_2 = \rho \tau_1 \rho^{-1}$ for some $\rho \in \prod_{j=0}^p \mathfrak{S}_{k_{j+1} - k_j}$. Hence
\[
|f(\mathfrak{S}_n)| = \biggl\lvert \mathfrak{S}_n \!\!\!\biggm/\!\!\!\biggl( \prod_{j=0}^p \mathfrak{S}_{k_{j+1} - k_j} \biggr) \biggr\rvert = \binom{n}{k_1, k_2-k_1,\dots,  n-k_p}. \qedhere
\] 
\end{proof}
As we can deduce from the proof above,  the number of stationary points of $\delta_A$ depends on the eigenvalue multiplicities of $A \in \Sym^2(\mathbb{R}^n)$.  As soon as $A$ has a repeated eigenvalue, $\delta_A$ will have an uncountably infinite number of stationary points since there can be an uncountably infinite number of possible $Q$'s in \eqref{eq:feig}. So the finite count in \eqref{eq:fcount} requires distinct eigenvalues. 

We will need the following result from \cite[p.~13]{Gantmacher59} for our next proof. The better known Autonne--Takagi factorization for diagonalizing complex symmetric matrices does not work for us as it requires $Q$ to be unitary, as opposed to complex orthogonal.
\begin{lemma}[Canonical form for complex symmetric matrices]\label{lem:canonical form}
Let $A\in \Sym^2(\mathbb{C}^n)$. Then there exists $Q\in \O_n(\mathbb{C})$ so that
\begin{equation}\label{eq:can1}
 A = Q\diag(\lambda_1 I_{k_1} + S_1, \dots, \lambda_p I_{k_p} + S_p)  Q^\tp,
\end{equation}
a block diagonal matrix with diagonal blocks of the form
\begin{equation}\label{eq:can2}
S_j = \frac{1}{2} (I_{k_j} - i J_{k_j}) T_{k_j} (I_{k_j} + i J_{k_j}) \in \Sym^2(\mathbb{C}^{k_j}),\quad j=1,\dots,p,
\end{equation}
where $I_k$ is the $k \times k$ identity matrix and
\[
J_k \coloneqq \begin{bmatrix}
0 & 0 & \dots & 0 & 1 \\
0 & 0 & \dots & 1 & 0 \\
\vdots & \vdots & \ddots & \vdots & \vdots \\
0 & 1 & \dots & 0 & 0 \\
1 & 0 & \dots & 0 & 0 
\end{bmatrix} \in \Sym^2(\mathbb{C}^k),\quad  
T_k \coloneqq  \begin{bmatrix}
0 & 1 & \dots & 0 & 0 \\
0 & 0 & \dots & 0 & 0 \\
\vdots & \vdots & \ddots & \vdots & \vdots \\
0 & 0 & \dots & 0 & 1 \\
0 & 0 & \dots & 0 & 0 
\end{bmatrix} \in \mathbb{C}^{k \times k}.
\]
Here $\lambda_1,\dots, \lambda_p$, not necessarily distinct, are the eigenvalues of $A$, and $k_1 + \dots + k_p = n$.
\end{lemma}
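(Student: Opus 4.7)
The plan is to reformulate the statement in terms of the complex symmetric bilinear form $b(x, y) \coloneqq x^\tp y$ on $\mathbb{C}^n$. This form is non-degenerate, its isometry group is exactly $\O_n(\mathbb{C})$, and $A = A^\tp$ is equivalent to $A$ being $b$-self-adjoint. Hence a decomposition $A = Q D Q^\tp$ with $Q \in \O_n(\mathbb{C})$ amounts to exhibiting a $b$-orthonormal basis of $\mathbb{C}^n$ in which $A$ has the prescribed block form. The first step is to decompose $\mathbb{C}^n = \bigoplus_\lambda E_\lambda$ into generalized eigenspaces of $A$ and show these are pairwise $b$-orthogonal. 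For distinct eigenvalues $\lambda \ne \mu$, the Chinese Remainder Theorem furnishes a polynomial $p \in \mathbb{C}[t]$ with $p(A)|_{E_\lambda} = I$ and $p(A)|_{E_\mu} = 0$; for $x \in E_\lambda$ and $y \in E_\mu$, self-adjointness then gives $b(x, y) = b(p(A) x, y) = b(x, p(A) y) = 0$. Combined with non-degeneracy of $b$ on the total space, this forces $b|_{E_\lambda}$ to be non-degenerate, reducing the problem to each generalized eigenspace separately, on which $N \coloneqq A - \lambda I$ is a $b$-self-adjoint nilpotent operator.

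The core of the argument is the claim that every nilpotent $b$-self-adjoint operator $N$ on a non-degenerate complex symmetric bilinear space $(V, b)$ admits a $b$-orthogonal decomposition $V = \bigoplus_j C_j$ into $N$-cyclic subspaces, with $N|_{C_j}$ represented by $S_{k_j}$ in a $b$-orthonormal basis of $C_j$. I would prove this by induction on $\dim V$: pick $v \in V$ of maximal nilpotency index $k$, form the cyclic subspace $C = \spn(v, Nv, \dots, N^{k-1} v)$, arrange that $b|_C$ is non-degenerate, and then use the orthogonal splitting $V = C \oplus C^\perp$ (both summands $N$-invariant by $b$-self-adjointness of $N$) to invoke the induction hypothesis on $C^\perp$.

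The main obstacle is ensuring that $v$ can be chosen with $b(v, N^{k-1} v) \ne 0$, from which $b|_C$ is automatically non-degenerate because the Gram matrix of $b$ in the basis $\{N^{i-1} v\}_{i=1}^{k}$ is anti-triangular with this scalar on its antidiagonal. If instead $b(u, N^{k-1} u) = 0$ for every $u \in V$, then polarization combined with self-adjointness of $N$ and symmetry of $b$ gives $b(u, N^{k-1} w) = 0$ for all $u, w \in V$, so $N^{k-1} V \subseteq V^\perp = \{0\}$, contradicting the maximality of $k$. The remainder is bookkeeping: after reducing $b|_C$ to $J_k$ and the action of $N$ on $C$ to $T_k^\tp$ (its matrix in the Jordan basis $\{N^{i-1}v\}$), a change of basis built from $I_k \pm iJ_k$ simultaneously diagonalizes $b|_C$ to $I_k$ and conjugates $T_k^\tp$ into exactly the matrix $S_k$ of the statement, using the identities $J_k^2 = I_k$, $(I_k - iJ_k)(I_k + iJ_k) = 2 I_k$, and $J_k T_k J_k = T_k^\tp$. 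These same identities directly verify $S_k^\tp = S_k$ and $S_k \sim T_k$, confirming that $S_k$ is a nilpotent complex symmetric matrix with a single Jordan block of size $k$, as required.
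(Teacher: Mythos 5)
Your proof is correct. Note that the paper does not actually prove this lemma---it is quoted from Gantmacher and used as a black box in the proof of Theorem~\ref{thm:EDdegree}---so any complete argument is necessarily a ``different route''. What you give is the standard structure-theoretic derivation: pass to the bilinear form $b(x,y)=x^\tp y$, establish $b$-orthogonality of generalized eigenspaces via a CRT idempotent in $\mathbb{C}[A]$, reduce to a nilpotent $b$-self-adjoint $N$ on a non-degenerate space, split off a cyclic subspace generated by a $v$ of maximal index $k$ with $b(v,N^{k-1}v)\neq 0$ (your polarization argument for the existence of such a $v$ is exactly right, and the anti-triangular Gram matrix correctly certifies non-degeneracy of $b|_C$), and finish with the $I\pm iJ$ conjugation turning the hyperbolic basis into a $b$-orthonormal one; the identity $J_kT_kJ_k=T_k^\tp$ together with $S_k^\tp=S_k$ indeed shows that the lower shift $T_k^\tp$ lands on $S_k$. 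The one step you assert rather than carry out is ``reducing $b|_C$ to $J_k$'': in the Jordan basis $\{N^{i-1}v\}$ the Gram matrix is anti-triangular with constant nonzero antidiagonal, but its entries above the antidiagonal need not vanish. You must therefore replace $v$ by $v'=c_0v+c_1Nv+\cdots+c_{k-1}N^{k-1}v$ and solve the resulting triangular system of conditions $b(v',N^mv')=0$ for $m<k-1$ and $=1$ for $m=k-1$; each successive equation is affine in the next coefficient $c_i$ with invertible leading coefficient $2c_0\,b(v,N^{k-1}v)$, so this goes through over $\mathbb{C}$ (using $2\neq 0$ and the existence of square roots) and the new basis still represents $N$ by the shift. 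This is indeed only bookkeeping, as you say, but it is the one place a full write-up would need a few explicit lines.
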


By Proposition~\ref{prop:real stationary points}, we may only conclude that
\[
\ED\bigl( \Flag_{b_1,\dots,  b_{p+1}}(k_1,\dots,  k_p, n) \bigr) \ge \binom{n}{k_1, k_2-k_1,\dots,  n-k_p},
\]
where we remind the reader that $\ED(\mathcal{M})$ is the number of stationary points of $\mathcal{M}^{\mathbb{C}}$, not $\mathcal{M}$. Showing equality requires a more careful argument.
\begin{theorem}\label{thm:EDdegree}
Let $\Flag_{b_1,\dots,  b_{p+1}}(k_1,\dots,  k_p, n)$ be as in \eqref{it:iso}. Then
\[
\ED\bigl( \Flag_{b_1,\dots,  b_{p+1}}(k_1,\dots,  k_p, n) \bigr) = \binom{n}{k_1, k_2-k_1,\dots,  n-k_p}.
\]
\end{theorem}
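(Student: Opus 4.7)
The plan is to establish the matching upper bound $\ED(\Flag_{b_1,\dots,b_{p+1}}(k_1,\dots,k_p,n)) \le \binom{n}{k_1,k_2-k_1,\dots,n-k_p}$, since Proposition~\ref{prop:real stationary points} already supplies the lower bound (the $\binom{n}{k_1,\dots,n-k_p}$ real stationary points are in particular complex stationary points of $\delta_A$ on $\Flag^{\mathbb{C}}$). The new content is to argue that no \emph{additional} stationary points appear over $\mathbb{C}$ once $A$ is generic. Throughout, I take $A \in \Sym^2(\mathbb{R}^n)$ generic, hence with distinct real eigenvalues $a_1,\dots,a_n$, and by the $\O_n(\mathbb{R})$-equivariance already used in Proposition~\ref{prop:real stationary points} I reduce to the case $A = \diag(a_1,\dots,a_n)$.

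The first step is to port the normal-space description from $\mathbb{R}$ to $\mathbb{C}$. Parameterize any $X \in \Flag^{\mathbb{C}}$ as $X = VDV^\tp$ with $V \in \O_n(\mathbb{C})$ and $D = \diag(b_1 I_{k_1}, \dots, b_{p+1} I_{n-k_p})$. Because the tangent space is $\{[\Omega, X] : \Omega^\tp = -\Omega\}$ and both $X, N$ are symmetric, the ambient normal space consists exactly of the symmetric matrices commuting with $X$; conjugating by $V$, these are the matrices $VEV^\tp$ with $E$ symmetric and block-diagonal in the block pattern of $D$. The stationarity condition $X - A \in \mathbb{N}_X^{\mathbb{C}}$ then forces $V^\tp A V$ to be block diagonal in the same pattern, say $V^\tp A V = \diag(B_1,\dots,B_{p+1})$ with $B_j \in \Sym^2(\mathbb{C}^{k_{j+1}-k_j})$.

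The main obstacle is the analogue of the step ``we may further assume that $C_1,\dots,C_{p+1}$ are all diagonal matrices'' from the proof of Proposition~\ref{prop:real stationary points}: over $\mathbb{C}$, this is false in general because complex symmetric matrices need not be $\O_n(\mathbb{C})$-diagonalizable, as Lemma~\ref{lem:canonical form} shows. Here I would exploit the genericity of $A$: since $V^\tp A V$ has the same multiset of eigenvalues as $A$ and those are distinct, each diagonal block $B_j$ has distinct eigenvalues, so its canonical form in Lemma~\ref{lem:canonical form} degenerates to the plain diagonalization ($k_j = 1$ and $S_j = 0$ in \eqref{eq:can1}--\eqref{eq:can2}). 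Hence each $B_j = W_j \Lambda_j W_j^\tp$ for some $W_j \in \O_{k_{j+1}-k_j}(\mathbb{C})$ and diagonal $\Lambda_j$.

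Setting $W = \diag(W_1,\dots,W_{p+1}) \in \O_n(\mathbb{C})$, the matrix $VW$ is a complex orthogonal diagonalizer of $A$. Because $A = \diag(a_1,\dots,a_n)$ has a one-dimensional eigenspace for each $a_i$ (spanned by the standard basis vector) and $(VW)^\tp(VW) = I$ forces each column of $VW$ to have complex bilinear norm $1$, the columns of $VW$ are $\pm e_i$ up to some permutation $\tau \in \mathfrak{S}_n$; that is, $VW = P_\tau S$ for a sign matrix $S = \diag(\pm 1,\dots,\pm 1)$. Since each block of $D$ is a scalar multiple of the identity, $W^\tp D W = D$, and since $S$ is diagonal, $S D S = D$. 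Therefore
\[
X = V D V^\tp = (VW)\, (W^\tp D W)\, (VW)^\tp = P_\tau\, S D S\, P_\tau^\tp = P_\tau D P_\tau^\tp,
\]
which is exactly the form \eqref{eq:feig} from Proposition~\ref{prop:real stationary points}. Consequently every complex stationary point is already a real stationary point, and the orbit-stabilizer count in the proof of Proposition~\ref{prop:real stationary points} yields $\binom{n}{k_1,k_2-k_1,\dots,n-k_p}$, matching the lower bound and completing the proof.
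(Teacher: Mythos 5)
Your proposal is correct and follows essentially the same route as the paper: reduce to diagonal $A$, show stationarity forces $V^\tp A V$ into block-diagonal complex symmetric form, observe each block inherits distinct eigenvalues by similarity, and invoke Lemma~\ref{lem:canonical form} to conclude the blocks are diagonalizable by complex orthogonal matrices. You in fact carry the argument one step further than the paper's write-up by explicitly deducing $VW = P_\tau S$ and hence $X = P_\tau D P_\tau^\tp$, which the paper leaves implicit after establishing orthogonal diagonalizability.
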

\begin{proof}
Let $A\in \Sym^2(\mathbb{R}^n)$ be generic so that eigenvalues of $A$ are all distinct. As in the proof of Proposition~\ref{prop:real stationary points} we may assume that $A = \diag(a_1,\dots, a_n)$. The complex locus of $\Flag_{b_1,\dots,  b_{p+1}}(k_1,\dots,  k_p, n)$ is
\[
\Flag_{b_1,\dots,  b_{p+1}}(k_1,\dots,  k_p, n)^{\mathbb{C}} =
\biggl\lbrace
X \in \Sym^2(\mathbb{C}^n): \prod_{j=0}^p (X - b_{j+1} I_n) = 0,  \; \tr(X) = \sum_{j=0}^p (k_{j+1} - k_j) b_{j+1}
\biggr\rbrace.
\]
The same line of argument in the proof of Proposition~\ref{prop:real stationary points} shows that $X\in  \Flag_{b_1,\dots,  b_{p+1}}(k_1,\dots,  k_p, n)^{\mathbb{C}}$ is a stationary point of $\delta_A$ if and only if $X = V D V^\tp$ for some $V\in \O_n(\mathbb{C})$ and 
\begin{equation}\label{cor:EDdegree:eq1}
V^\tp A V  = 
\begin{bmatrix}
b_1I_{k_1} - C_1 & \cdots & 0  \\
\vdots & \ddots & \vdots \\
0 & \cdots &  b_{p+1} I_{n- k_p}-C_{p+1}  \\
\end{bmatrix} \eqqcolon C
\end{equation}
for some $C_{j+1} \in \Sym^2( \mathbb{C}^{k_{j+1} - k_j})$,  $j=0,\dots,p$. If we could show that $C_1, \dots, C_{p+1}$ are diagonalizable by complex orthogonal matrices we are done; but unlike the proof of Proposition~\ref{prop:real stationary points}, this is not straightforward because  complex symmetric matrices are not always diagonalizable by complex orthogonal matrices.

Since $V \in \O_n(\mathbb{C})$, i.e., $V^\tp = V^{-1}$, \eqref{cor:EDdegree:eq1} is a similarity transformation between $A = \diag(a_1,\dots, a_n)$ and  $C = \diag(b_1 I_{k_1} - C_1,  \dots,  b_{p+1} I_{n-k_p} - C_{p+1})$. Hence the eigenvalues of $C$ are exactly $a_1,\dots, a_n$, which are all distinct; and consequently each  $b_i I_{k_i} - C_i$ must also have distinct eigenvalues, $i = 1, \dots,p+1$. As $b_i I_{k_i} - C_i$ is a complex symmetric matrix, we may apply  Lemma~\ref{lem:canonical form} to bring it into the form \eqref{eq:can1} with some complex orthogonal matrix $Q_i$. But since $b_i I_{k_i} - C_i$ has distinct eigenvalues, the matrices $S_j$'s in \eqref{eq:can2} must all be zero. In other words, each $b_i I_{k_i} - C_i$ is in fact diagonalizable by $Q_i$, $i = 1, \dots,p+1$. Hence $C$ is diagonalizable by the complex orthogonal matrix $Q \coloneqq \diag(Q_1,\dots, Q_{p+1})$.
\end{proof}
In general, it is not true that the Euclidean distance function has the same number of stationary points over $\mathcal{M}^{\mathbb{C}}$ and when restricted to $\mathcal{M}$ but from the proof of Theorem~\ref{thm:EDdegree}, we see that this holds for the flag manifold; in fact the proof gives us  the following corollary.
\begin{corollary}\label{cor:flag}
For a generic $A\in \Sym^2(\mathbb{R}^n)$, the stationary points of the Euclidean distance function $\delta_A : \Flag_{b_1,\dots,  b_{p+1}}(k_1,\dots,  k_p, n)^{\mathbb{C}} \to \mathbb{C}$ are all contained in $\Flag_{b_1,\dots,  b_{p+1}}(k_1,\dots,  k_p, n)$.
\end{corollary}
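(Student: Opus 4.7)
The plan is to extract more information from the proof of Theorem~\ref{thm:EDdegree}. That proof already shows any complex stationary point has the form $X = VDV^{\tp}$ with $V \in \O_n(\mathbb{C})$ and $V^{\tp} A V$ equal to the block-diagonal matrix $C = \diag(b_1 I_{k_1} - C_1, \dots, b_{p+1} I_{n-k_p} - C_{p+1})$, and it produces a block-diagonal $Q = \diag(Q_1, \dots, Q_{p+1}) \in \O_n(\mathbb{C})$ with $Q^{\tp} C Q$ diagonal. My strategy is to absorb $Q$ into $V$: set $W \coloneqq VQ \in \O_n(\mathbb{C})$ and show first that $W$ is forced to be real, and second that the substitution $V \mapsto W$ leaves $X$ untouched.

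For the realness of $W$, observe that $W^{\tp} A W = Q^{\tp} C Q$ is diagonal with entries a permutation of $a_1, \dots, a_n$. Since $A = \diag(a_1, \dots, a_n)$ has distinct real eigenvalues with one-dimensional coordinate eigenlines $\spn(e_j)$, each column of $W$ must be a scalar multiple of some standard basis vector $e_{\sigma(j)}$. The complex orthogonality $W^{\tp} W = I$ will then force these scalars to lie in $\{\pm 1\}$, so $W = P_\sigma S$ for a permutation matrix $P_\sigma$ and a diagonal sign matrix $S$; in particular $W$ is real.

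For the invariance of $X$, I would exploit the block scalar structure of $D = \diag(b_1 I_{k_1}, \dots, b_{p+1} I_{n-k_p})$: the block sizes of $Q$ match those of $D$, and $Q_i^{\tp}(b_i I)Q_i = b_i I$ for each $i$, so $Q^{\tp} D Q = D$. Consequently
\[
X = V D V^{\tp} = (VQ)(Q^{\tp} D Q)(Q^{\tp} V^{\tp}) = W D W^{\tp} = P_\sigma D P_\sigma^{\tp},
\]
the last equality absorbing the sign matrix $S$ via $S D S^{\tp} = D$, which holds because $S$ and $D$ are both diagonal. By Proposition~\ref{prop:real stationary points}, this $X$ lies in the real flag manifold $\Flag_{b_1,\dots,b_{p+1}}(k_1,\dots,k_p,n)$.

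The only substantive step is the realness argument for $W$; everything else is bookkeeping enabled by the identity $Q^{\tp} D Q = D$, which lets the complex diagonalizer $Q$ slide harmlessly past $D$ inside the expression $V D V^{\tp}$. In other words, the block scalar form of $D$ renders the complex part of the change of basis invisible and unmasks each complex stationary point as a real one.
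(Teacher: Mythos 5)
Your proposal is correct and is precisely the argument the paper leaves implicit: the paper derives Corollary~\ref{cor:flag} directly from the proof of Theorem~\ref{thm:EDdegree}, where the block-diagonal complex orthogonal $Q$ diagonalizing $C$ is the key output, and the remaining steps (distinct eigenvalues of the diagonal $A$ forcing $VQ$ to be a signed permutation matrix, and $Q^{\tp}DQ = D$ because the blocks of $D$ are scalar) are exactly the bookkeeping you carry out. You have simply made explicit what the paper asserts follows "from the proof of Theorem~\ref{thm:EDdegree}."
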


In most calculations of Euclidean distance degrees, it is generally not possible (and not necessary) to obtain all stationary points explicitly as we did in Proposition~\ref{prop:real stationary points}. But given these, it is straightforward to actually minimize the Euclidean distance function.
\begin{corollary}[Nearest point on a flag manifold]\label{cor:bestflag}
Let $A\in \Sym^2(\mathbb{R}^n)$ have distinct eigenvalues and $b_1 > \cdots  > b_p$.  Then the best approximation problem for $\Flag_{b_1,\dots,  b_{p+1}}(k_1,\dots,  k_p, n)$,
\begin{align*}
\operatorname{minimize} \quad & \lVert A - X \rVert \\
\operatorname{subject~to} \quad & X \in \Flag_{b_1,\dots,  b_{p+1}}(k_1,\dots,  k_p, n),
\end{align*}
has the unique solution
\[
X = Q  
\diag(b_1 I_{k_1},  b_2  I_{k_2 - k_1},  \dots,  b_{p+1} I_{n - k_p}) Q^\tp
\]
where $A = Q \diag(a_1,\dots,  a_n) Q^\tp$ is an eigenvalue decomposition with $a_1 > \dots > a_n$.
\end{corollary}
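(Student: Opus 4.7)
The plan is to reduce the optimization problem to a finite combinatorial problem via the explicit classification of stationary points in Proposition~\ref{prop:real stationary points}, and then to extract the global minimizer by the rearrangement inequality.

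\textbf{Step 1 (Reduction to stationary points).} Since $\Flag_{b_1,\dots,b_{p+1}}(k_1,\dots,k_p,n)$ is compact, the best approximation problem admits a global minimizer, which in particular must be a stationary point of $\delta_A$. The hypothesis that $A$ has distinct eigenvalues places us squarely in the setting of Proposition~\ref{prop:real stationary points}, so every candidate minimizer has the form
\[
X_\tau = Q P_\tau D P_\tau^\tp Q^\tp, \qquad D \coloneqq \diag(b_1 I_{k_1}, b_2 I_{k_2-k_1}, \dots, b_{p+1} I_{n-k_p}),
\]
for some $\tau \in \mathfrak{S}_n$, with $Q$ the fixed orthogonal matrix from an eigendecomposition $A = Q\diag(a_1,\dots,a_n)Q^\tp$.

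\textbf{Step 2 (Reduction to rearrangement).} Using orthogonal invariance of the Frobenius norm and writing $(d_1,\dots,d_n) = (b_1,\dots,b_1,b_2,\dots,b_2,\dots,b_{p+1},\dots,b_{p+1})$ for the diagonal of $D$, I would compute
\[
\lVert A - X_\tau\rVert^2 = \lVert\diag(a_1,\dots,a_n) - P_\tau D P_\tau^\tp\rVert^2 = \sum_{i=1}^n \bigl(a_i - d_{\tau^{-1}(i)}\bigr)^2.
\]
Expanding and discarding the constant $\sum a_i^2 + \sum d_i^2$ reduces the problem to \emph{maximizing} $\sum_{i=1}^n a_i\, d_{\tau^{-1}(i)}$ over $\tau \in \mathfrak{S}_n$.

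\textbf{Step 3 (Rearrangement inequality and uniqueness).} Since $a_1 > \dots > a_n$ and, once sorted in weakly decreasing order, the multiset $\{d_1,\dots,d_n\}$ reads $b_1,\dots,b_1,b_2,\dots,b_2,\dots$ (because $b_1 > \dots > b_{p+1}$), the rearrangement inequality asserts that $\sum a_i d_{\tau^{-1}(i)}$ is maximized precisely when the sequence $(d_{\tau^{-1}(1)},\dots,d_{\tau^{-1}(n)})$ itself is weakly decreasing. The strict inequalities among the $a_i$'s force the optimal sequence to match this sorted pattern exactly, i.e., $d_{\tau^{-1}(i)} = b_{j+1}$ whenever $k_j < i \le k_{j+1}$. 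Any such $\tau$ yields $P_\tau D P_\tau^\tp = D$, and hence $X_\tau = QDQ^\tp$, giving existence and uniqueness of the minimizer in one stroke.

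\textbf{Anticipated obstacle.} None of the individual steps is deep; the only delicate point is handling the multiplicities in the rearrangement argument, namely checking that although many permutations $\tau$ attain the maximum, they all collapse to a single matrix $X = QDQ^\tp$ because permutations within each eigenvalue block commute with $D$. Once this is observed, the uniqueness assertion follows immediately from the strict rearrangement inequality applied to the distinct eigenvalues $a_1 > \dots > a_n$.
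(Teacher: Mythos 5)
Your proof is correct and follows essentially the same route as the paper's: reduce via Proposition~\ref{prop:real stationary points} to a minimization over $\tau\in\mathfrak{S}_n$ and conclude from the orderings $a_1>\cdots>a_n$ and $b_1>\cdots>b_{p+1}$ that the identity (up to block permutations fixing $D$) is the unique optimum. The paper leaves the rearrangement-inequality step and the collapsing of block-preserving permutations implicit; your Steps 2--3 simply make these explicit.
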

\begin{proof}
By Proposition~\ref{prop:real stationary points},  the best approximation problem is equivalent to 
\begin{align*}
\operatorname{minimize} \quad & \lVert \diag(a_1,\dots,  a_n) - P_\tau  \diag(b_1 I_{k_1},  b_2  I_{k_2 - k_1},  \dots,  b_{p+1} I_{n - k_p}) P_\tau ^\tp \rVert \\
\operatorname{subject~to} \quad & \tau \in \mathfrak{S}_n.
\end{align*}
Since $a_1 > \dots > a_n$ and $b_1 > \cdots > b_p$,  the unique solution is given by $\tau = \operatorname{id}$.
\end{proof}

\section{Euclidean distance degree of Grassmannian and Schubert varieties}\label{sec:Gr}

The Euclidean distance degree of the Grassmann manifold is just the special $p =1$ case of Theorem~\ref{thm:EDdegree}, but deserves stating separately because of the particular importance of Grassmannians.
\begin{corollary}\label{cor:GrEDdeg}
Let $\Gr_{a,b}(k,n)$ be as in \eqref{it:quad}. Then
\[
\ED\bigl( \Gr_{a,b}(k,n) \bigr) = \binom{n}{k}.
\]
\end{corollary}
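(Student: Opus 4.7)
The plan is to deduce this statement as an immediate specialization of Theorem~\ref{thm:EDdegree}. First I would verify that the quadratic model $\Gr_{a,b}(k,n)$ defined in \eqref{it:quad} coincides, as an affine variety, with the isospectral flag model $\Flag_{a,b}(k, n)$ in \eqref{it:iso} in the case $p = 1$, $k_1 = k$, $b_1 = a$, $b_2 = b$. Matching the defining equations: the product equation $\prod_{j=0}^{1}(X - b_{j+1}I_n) = (X - aI_n)(X - bI_n) = 0$ is exactly the quadratic condition, and the trace equation $\tr(X) = \sum_{j=0}^{1}(k_{j+1} - k_j) b_{j+1} = ka + (n-k)b$ matches as well. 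Hence $\Gr_{a,b}(k,n) = \Flag_{a,b}(k,n)$ as subsets of $\Sym^2(\mathbb{R}^n)$, and the same identification carries over to their complex loci.

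Next I would invoke Theorem~\ref{thm:EDdegree} with $p = 1$, which gives
\[
\ED\bigl(\Gr_{a,b}(k,n)\bigr) = \ED\bigl(\Flag_{a,b}(k,n)\bigr) = \binom{n}{k, \, n-k} = \binom{n}{k},
\]
where the final equality is the standard identity for the multinomial coefficient with two parts. No further work is required.

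There is really no obstacle here; the only point worth checking carefully is the trivial bookkeeping of the model parameters (ensuring that the indexing convention $k_0 = 0$, $k_{p+1} = n$ used implicitly in \eqref{it:iso} is applied correctly so that the two trace conditions agree). Once the identification of the two varieties is made, the corollary is purely a matter of reading off the $p = 1$ case of the general formula.
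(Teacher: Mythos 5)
Your proposal is correct and is exactly the paper's argument: the paper states this corollary as the special case $p=1$ of Theorem~\ref{thm:EDdegree}, having already noted in Section~\ref{sec:man} that the quadratic model is the $p=1$ instance of the isospectral model. The parameter bookkeeping you check is the only content, and you have it right.
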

We also have the following explicit characterization of the stationary points from Proposition~\ref{prop:real stationary points} and Corollary~\ref{cor:GrEDdeg}.
\begin{corollary}\label{cor:GrStationary}
Let $A\in \Sym^2(\mathbb{R}^n)$ have distinct eigenvalues and let $A = Q \diag(a_1,\dots,  a_n) Q^\tp$ be an eigenvalue decomposition with $Q \in \O_n(\mathbb{R})$. Then  $X \in \Gr_{a,b}(k,n)$ is a stationary point of the Euclidean distance function $\delta_A$ if and only if 
\begin{equation}\label{eq:geig}
X = Q P_\tau  \begin{bmatrix} a I_k & 0 \\ 0 & b I_{n-k} \end{bmatrix} P_\tau ^\tp Q^\tp
\end{equation}
where $P_\tau$ is the permutation matrix associated to $\tau \in \mathfrak{S}_n$. The $\binom{n}{k}$ distinct stationary points of $\delta_A : \Gr_{a,b}(k,n)^{\mathbb{C}} \to \mathbb{C}$ are all contained in $\Gr_{a,b}(k,n)$.
\end{corollary}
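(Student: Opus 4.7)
The plan is to derive this corollary by specializing the flag-manifold results of Section~\ref{sec:Fl} to the case $p=1$, identifying $\Flag_{b_1,b_2}(k_1,n)$ with $\Gr_{a,b}(k,n)$ via $b_1 = a$, $b_2 = b$, and $k_1 = k$. With this identification, the result will follow by combining Proposition~\ref{prop:real stationary points}, Corollary~\ref{cor:flag}, and Corollary~\ref{cor:GrEDdeg}, with no essentially new computation required.

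First I would read off the parameterization \eqref{eq:geig} and the count directly from Proposition~\ref{prop:real stationary points}. Under the $p=1$ specialization, the block-diagonal matrix $\diag(b_1 I_{k_1}, \dots, b_{p+1} I_{n-k_p})$ in \eqref{eq:feig} collapses to $\diag(a I_k, b I_{n-k})$, which is exactly the matrix appearing in \eqref{eq:geig}; and the multinomial coefficient simplifies to $\binom{n}{k,n-k} = \binom{n}{k}$. This characterizes the stationary points lying in the real locus $\Gr_{a,b}(k,n)$, together with their cardinality. The distinctness of the matrices $Q P_\tau \diag(a I_k, b I_{n-k}) P_\tau^\tp Q^\tp$ as $\tau$ ranges over coset representatives of $\mathfrak{S}_k \times \mathfrak{S}_{n-k}$ in $\mathfrak{S}_n$ is immediate from the distinctness of the eigenvalues of $A$, since the $k$-element subset $\{a_{\tau(1)},\dots,a_{\tau(k)}\}$ of the spectrum can be recovered as the kernel of $X - b I_n$.

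The one step that is not purely bookkeeping, and which I would single out as the main obstacle, is upgrading this enumeration from the real locus to the full complex locus $\Gr_{a,b}(k,n)^{\mathbb{C}}$. For this I would invoke Corollary~\ref{cor:flag} (equivalently, the argument used to establish Theorem~\ref{thm:EDdegree}) specialized to $p=1$. An arbitrary complex stationary point produces complex symmetric blocks $C_1 \in \Sym^2(\mathbb{C}^k)$ and $C_2 \in \Sym^2(\mathbb{C}^{n-k})$ via the complex analogue of \eqref{cor:EDdegree:eq1}, and one must rule out non-trivial nilpotent parts $S_j$ in their Gantmacher canonical forms \eqref{eq:can1}--\eqref{eq:can2}. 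Lemma~\ref{lem:canonical form} accomplishes this once one observes that each $b_i I_{k_i} - C_i$ inherits a subset of the distinct eigenvalues of $A$ via similarity, forcing the $S_j$ to vanish and each block to be complex orthogonally diagonalizable. The intertwiner $V \in \O_n(\mathbb{C})$ therefore reduces, modulo the $\O_k(\mathbb{C}) \times \O_{n-k}(\mathbb{C})$ stabilizer of $\diag(aI_k, bI_{n-k})$, to a permutation matrix times the real diagonalizer $Q$ of $A$, so every complex stationary point already has the real form \eqref{eq:geig}. Matching this list against $\ED(\Gr_{a,b}(k,n)) = \binom{n}{k}$ from Corollary~\ref{cor:GrEDdeg} confirms that the $\binom{n}{k}$ matrices exhaust all complex stationary points.
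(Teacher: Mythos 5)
Your proposal is correct and matches the paper's route: the paper gives no separate proof, deriving the corollary as the $p=1$ specialization of Proposition~\ref{prop:real stationary points} (real stationary points and their count) combined with Corollary~\ref{cor:GrEDdeg}/Theorem~\ref{thm:EDdegree} (whose Gantmacher canonical-form argument shows all complex stationary points are real). The only quibble is cosmetic: the kernel of $X - bI_n$ is the $b$-eigenspace rather than the $a$-eigenspace, but either one recovers the subset $\{\tau(1),\dots,\tau(k)\}$, so the distinctness argument stands.
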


Our goal in this section is not so much to state Corollaries~\ref{cor:GrEDdeg} and \ref{cor:GrStationary} but to find the Euclidean distance degrees of a special submanifold of the Grassmannian so important it is given its own name: the Schubert variety. It has played a central role in the topology and geometry of the Grassmannian in the last 150 years \cite{Fulton97, GH94, Kleiman76, Manivel01, Schubert79} and has recently been shown to be important in applications too  \cite{KMK24, LZFYY24, KN23, YL16}.

Let $k\le l \le m \le n$ be positive integers throughout this section. We will use blackboard bold $\mathbb{U}$, $\mathbb{V}$, $\mathbb{W}$  to denote subspaces, i.e., points on various Grassmannians.
\begin{definition}\label{def:Schu}
Let $\mathbb{U} \in \Gr(k,\mathbb{R}^n)$ and  $\mathbb{W} \in \Gr(m,\mathbb{R}^n)$, i.e., $\mathbb{U}$  and $\mathbb{W}$ are respectively a $k$- and an $m$-dimensional subspace in $\mathbb{R}^n$. 
Then the Schubert variety of $l$-dimensional subspaces between $\mathbb{U}$ and $\mathbb{W}$ is denoted and defined by
\begin{equation}\label{eq:Om}
\Omega(\mathbb{U},  \mathbb{W}) \coloneqq \lbrace 
\mathbb{V} \in \Gr(l,\mathbb{R}^n):   \mathbb{U} \subseteq  \mathbb{V} \subseteq \mathbb{W}
\rbrace.
\end{equation}
\end{definition}
The set $\Omega(\mathbb{U},  \mathbb{W})$ is both an algebraic subvariety and a smooth submanifold of the Grassmannian $\Gr(l,\mathbb{R}^n)$.
It is also a Schubert variety in the sense of \cite{Fulton97, GH94, Kleiman76, Manivel01, Schubert79} but we caution the reader that not every Schubert variety can be written in this form. In fact this Schubert variety is quite special in that it is isomorphic to a Grassmannian:
\begin{equation}\label{eq:OmGr}
\Omega(\mathbb{U},  \mathbb{W}) = \{ \mathbb{V} \in \Gr(l,  \mathbb{R}^n): \mathbb{V}/\mathbb{U} \subseteq \mathbb{W}/\mathbb{U} \} \cong \Gr(l-k,  \mathbb{W}/\mathbb{U}) \cong \Gr(l-k,  \mathbb{R}^{m - k}).
\end{equation}
The special cases $\Omega(\mathbb{U}, \mathbb{R}^n)$ and $\Omega( \{0\},  \mathbb{W})$ give us the Schubert varieties studied in \cite{YL16}:
\[
\Omega_\p (\mathbb{U}) \coloneqq \lbrace 
\mathbb{V} \in \Gr(m,\mathbb{R}^n):  \mathbb{U} \subseteq \mathbb{V}
\rbrace, \qquad \Omega_\m (\mathbb{W}) \coloneqq \lbrace 
\mathbb{V} \in \Gr(k,\mathbb{R}^n):  \mathbb{V} \subseteq \mathbb{W}
\rbrace
\]
are the Schubert variety of $m$-dimensional subspaces containing $\mathbb{U}$ and the Schubert variety of $k$-dimensional subspaces contained in $\mathbb{W}$.  

For distinct real numbers $a, b$ and positive integers $l \le n$, the map
\[
\varphi : \Gr(l,\mathbb{R}^n) \to \Gr_{a,b}(l,n),\quad \varphi (\mathbb{V}) = Q \diag(aI_l, b I_{n-l}) Q^\tp,
\]
where the first $l$ column vectors of $Q \in \O_n(\mathbb{R})$ form an orthonormal basis of $\mathbb{V}$, gives an isomorphism $ \Gr(l,\mathbb{R}^n) \cong \Gr_{a,b}(l,n)$ \cite{LK24a}. Its inverse is given by $\varphi^{-1}(X) = \spn\{q_1,\dots,  q_l\}$,  where $X = Q I_{l,n-l} Q^\tp$ and $q_1,\dots,  q_l$ are the first $l$ column vectors of $Q\in \O_n(\mathbb{R})$. The existence of this isomorphism $\varphi$, shown to be equivariant and has minimal ambient dimension in \cite{LK24a}, is the justification for calling its image $\Gr_{a,b}(l,n) = \varphi\bigl(\Gr(l,\mathbb{R}^n)\bigr)$ the quadratic model  of $ \Gr(l,\mathbb{R}^n)$. We will show that $\varphi\bigl(\Omega(\mathbb{U},  \mathbb{W}) \bigr)$ gives us a neat model for the Schubert variety of $l$-dimensional subspaces between $\mathbb{U}$ and $\mathbb{W}$ too.
\begin{theorem}[Quadratic models for Schubert varieties]\label{thm:Schubert}
Let $k \le l \le m \le n$ be positive integers and $\mathbb{U} \in \Gr(k,\mathbb{R}^n)$, $\mathbb{W} \in \Gr(m, \mathbb{R}^n)$. Let $Q = [q_1,\dots,  q_n] \in \O_n(\mathbb{R})$ be such that $\spn\{q_1,\dots,  q_k\} = \mathbb{U}$ and $\spn\{q_{k+1},\dots,  q_{k+n-m}\} = \mathbb{W}^{\perp}$.  Then the image of $\varphi$ restricted to $\Omega(\mathbb{U},\mathbb{W})$ is given by
\begin{equation}\label{eq:OmAB}
\Omega_{a,b}(\mathbb{U},\mathbb{W}) \coloneqq \left\lbrace 
Q \begin{bmatrix}
aI_k & 0 &0\\
0 & bI_{n-m} &0\\
0 & 0 & X
\end{bmatrix} Q^\tp \in \Gr_{a,b}(l, n) : X \in \Gr_{a,b}(l-k, m - k)
\right\rbrace.
\end{equation}
\end{theorem}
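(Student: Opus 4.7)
The plan is to exploit the identity $\varphi(\mathbb{V}) = bI_n + (a-b)P_{\mathbb{V}}$, where $P_{\mathbb{V}}$ denotes orthogonal projection onto $\mathbb{V}$. This follows from $Q\diag(aI_l,bI_{n-l})Q^\tp = aQ_1Q_1^\tp + bQ_2Q_2^\tp$ with $Q_1 = [q_1,\dots,q_l]$ and $Q_2 = [q_{l+1},\dots,q_n]$, together with $Q_1Q_1^\tp = P_{\mathbb{V}}$ and $Q_1Q_1^\tp + Q_2Q_2^\tp = I_n$. Once this identity is in hand, the entire proof reduces to reading off the block structure of $P_{\mathbb{V}}$ in the prescribed basis $Q$.

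For $\mathbb{V} \in \Omega(\mathbb{U},\mathbb{W})$, I would decompose $\mathbb{V} = \mathbb{U} \oplus V'$ with $V' \coloneqq \mathbb{V} \cap \mathbb{U}^\perp$. Since $\mathbb{U} \subseteq \mathbb{W}$ (otherwise $\Omega(\mathbb{U},\mathbb{W})$ is empty), there is an orthogonal direct sum decomposition $\mathbb{R}^n = \mathbb{U} \oplus \mathbb{W}^\perp \oplus (\mathbb{W}\cap\mathbb{U}^\perp)$ with dimensions $k$, $n-m$, $m-k$ respectively. The hypothesis on $Q$ places the first two summands into the first $k$ and the next $n-m$ columns, and so $Q_3 \coloneqq [q_{k+n-m+1},\dots,q_n]$ must span $\mathbb{W}\cap\mathbb{U}^\perp$. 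In particular $V' = Q_3\widetilde{V}$ for a unique $\widetilde{V} \in \Gr(l-k,\mathbb{R}^{m-k})$, and $P_{\mathbb{V}} = P_{\mathbb{U}} + P_{V'}$, with the two projections corresponding in the basis $Q$ to the block-diagonal matrices $\diag(I_k,0,0)$ and $\diag(0,0,P_{\widetilde{V}})$ respectively.

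Plugging these into $\varphi(\mathbb{V}) = bI_n + (a-b)P_{\mathbb{V}}$ will immediately yield
\[
\varphi(\mathbb{V}) = Q\diag\bigl(aI_k,\ bI_{n-m},\ bI_{m-k}+(a-b)P_{\widetilde{V}}\bigr)Q^\tp,
\]
and the lower-right block is by definition $\varphi(\widetilde{V}) \in \Gr_{a,b}(l-k,m-k)$. This establishes the inclusion $\varphi\bigl(\Omega(\mathbb{U},\mathbb{W})\bigr) \subseteq \Omega_{a,b}(\mathbb{U},\mathbb{W})$. The reverse inclusion is obtained by simply reversing the construction: any $X \in \Gr_{a,b}(l-k,m-k)$ equals $\varphi(\widetilde{V})$ for a unique $\widetilde{V} \in \Gr(l-k,\mathbb{R}^{m-k})$, and setting $\mathbb{V} \coloneqq \mathbb{U} \oplus Q_3\widetilde{V}$ produces an element of $\Omega(\mathbb{U},\mathbb{W})$ that maps to the prescribed matrix.

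There is no serious obstacle here; the only point that deserves care is verifying that the hypothesis on $Q$ forces $q_{k+n-m+1},\dots,q_n$ to span $\mathbb{W}\cap\mathbb{U}^\perp$, which is a direct consequence of $\mathbb{U}\subseteq\mathbb{W}$ together with orthonormality of $Q$. Everything else is bookkeeping once the projection identity for $\varphi$ is exploited.
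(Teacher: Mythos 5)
Your argument is correct; the only hypotheses it uses beyond the statement are that $\mathbb{U}\subseteq\mathbb{W}$ (which is forced by the assumption that $q_1,\dots,q_k$ span $\mathbb{U}$ while $q_{k+1},\dots,q_{k+n-m}$ span $\mathbb{W}^\perp$) and the observation that the last $m-k$ columns of $Q$ then necessarily span $\mathbb{W}\cap\mathbb{U}^\perp$, both of which you address. However, your route differs from the paper's. The paper argues in the opposite direction: it takes an arbitrary $X\in\Gr_{a,b}(l-k,m-k)$, writes $X=V\diag(aI_{l-k},bI_{m-l})V^\tp$, computes $\varphi^{-1}$ of the resulting block matrix explicitly in terms of columns of $Q\diag(I_{n+k-m},V)$ to obtain only the inclusion $\varphi^{-1}\bigl(\Omega_{a,b}(\mathbb{U},\mathbb{W})\bigr)\subseteq\Omega(\mathbb{U},\mathbb{W})$, and then upgrades this to equality by an algebraic-geometry step: $\varphi^{-1}\bigl(\Omega_{a,b}(\mathbb{U},\mathbb{W})\bigr)$ is Zariski closed and has the same dimension $(l-k)(m-l)$ as the (irreducible) Schubert variety, hence must be all of it. You instead prove both inclusions directly by elementary linear algebra, resting on the affine formula $\varphi(\mathbb{V})=bI_n+(a-b)P_{\mathbb{V}}$ and the orthogonal decomposition $\mathbb{R}^n=\mathbb{U}\oplus\mathbb{W}^\perp\oplus(\mathbb{W}\cap\mathbb{U}^\perp)$, which reduces everything to block bookkeeping for projections. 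What your approach buys is a self-contained, explicitly bijective correspondence $\mathbb{V}\mapsto\widetilde{V}$ with no appeal to irreducibility or dimension theory; what the paper's approach buys is a shorter computation at the cost of invoking those algebro-geometric facts. Both are valid proofs of the theorem.
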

\begin{proof}
We need to show that $\varphi\bigl(\Omega(\mathbb{U},  \mathbb{W})\bigr) = \Omega_{a,b}(\mathbb{U},\mathbb{W})$. By \eqref{it:quad}, each $X \in \Gr_{a,b}(l-k,m-k)$ takes the form $X = V \diag(a I_{l-k},  b I_{m-k}) V^\tp$ for some $V \in \O_{m-k}(\mathbb{R})$.  So
\[
Q \begin{bmatrix}
aI_k & 0 &0\\
0 & bI_{n-m} &0\\
0 & 0 & X
\end{bmatrix} Q^\tp = Q \begin{bmatrix}
I_{n + k - m} & 0 \\
0 & V
\end{bmatrix} \begin{bmatrix}
aI_k & 0 & 0 & 0\\
0 & b I_{n-m} & 0 & 0\\
0 & 0 & a I_{l-k} & 0  \\
0 & 0 & 0 & b I_{m-l} 
\end{bmatrix} \begin{bmatrix}
I_{n + k - m} & 0 \\
0 & V^\tp
\end{bmatrix} Q^\tp \eqqcolon Y.
\]
Let $Q \begin{bsmallmatrix}
I_{n+k - m} & 0 \\
0 & V
\end{bsmallmatrix}  = [q_1,\dots,  q_n ]$. Then
\[
\varphi^{-1}(Y) = \spn \{q_1,\dots, q_k,  q_{n+k-m+1},\dots,  q_{n + l - m} \}.
\]
Note that $\varphi^{-1}(Y)$ is $l$-dimensional, $\mathbb{U}  \subseteq \varphi^{-1}(Y)$, and $\mathbb{W}^{\perp} \subseteq \varphi^{-1}(Y)^{\perp}$. Hence we have shown that $\varphi^{-1} \bigl( \Omega_{a,b}(\mathbb{U}, \mathbb{W}) \bigr) \subseteq \Omega(\mathbb{U},\mathbb{W})$. As $\Omega_{a,b}(\mathbb{U},\mathbb{W})$ is Zariski closed and $\varphi$ is a regular map, $\varphi^{-1}\bigl(\Omega_{a,b}(\mathbb{U},\mathbb{W})\bigr)$ is also Zariski closed.  By \eqref{eq:OmGr},
\[
\dim \Omega(\mathbb{U},\mathbb{W}) = (l-k)(m-l) = \dim \Gr_{a,b}(l-k,m-k) = \dim \Omega_{a,b}(\mathbb{U}, \mathbb{W}),
\]
and thus we in fact have equality.
\end{proof}

We will call \eqref{eq:OmAB} the quadratic model of the Schubert variety $\Omega_{a,b}(\mathbb{U},\mathbb{W})$. We will now deduce its Euclidean distance degree.
\begin{corollary}\label{cor:ED Omega}
Let $\Omega_{a,b}(\mathbb{U},\mathbb{W})$ be as in \eqref{eq:OmAB}.  Then
\[
\ED \bigl( \Omega_{a,b} (\mathbb{U},  \mathbb{W}) \bigr) = \binom{m-k}{l-k}.
\]
\end{corollary}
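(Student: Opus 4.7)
The plan is to reduce the Euclidean distance problem on $\Omega_{a,b}(\mathbb{U},\mathbb{W})$ to the one on the smaller quadratic Grassmann model $\Gr_{a,b}(l-k, m-k)$, whose Euclidean distance degree was just computed in Corollary~\ref{cor:GrEDdeg} to be $\binom{m-k}{l-k}$.

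First I would invoke the orthogonal invariance $\delta_A(QYQ^\tp) = \delta_{Q^\tp AQ}(Y)$ with the matrix $Q$ from Theorem~\ref{thm:Schubert} to assume, without loss of generality, that $\mathbb{U} = \spn\{e_1,\dots,e_k\}$ and $\mathbb{W}^\perp = \spn\{e_{k+1},\dots,e_{k+n-m}\}$; this preserves both the genericity of $A$ and the number of stationary points. In these coordinates Theorem~\ref{thm:Schubert} presents $\Omega_{a,b}(\mathbb{U},\mathbb{W})$ as the image of the affine embedding $\iota : X \mapsto \diag(aI_k, bI_{n-m}, X)$ applied to $\Gr_{a,b}(l-k, m-k)$. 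Since the additional defining equations ($Y e_i = a e_i$ for $1 \le i \le k$ and $Y e_j = b e_j$ for $k+1 \le j \le k+n-m$) are polynomial, the complex locus $\Omega_{a,b}(\mathbb{U},\mathbb{W})^{\mathbb{C}}$ is likewise parameterized by $\iota$ applied to $\Gr_{a,b}(l-k, m-k)^{\mathbb{C}}$.

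Next, partitioning $A$ into symmetric $3\times 3$ blocks $[A_{ij}]$ with block sizes $(k, n-m, m-k)$, I would observe that at any $Y = \iota(X) \in \Omega_{a,b}(\mathbb{U},\mathbb{W})^{\mathbb{C}}$ the tangent space $\mathbb{T}_Y \Omega_{a,b}(\mathbb{U},\mathbb{W})^{\mathbb{C}}$ consists precisely of symmetric matrices whose only non-zero block is the bottom-right one, itself tangent to $\Gr_{a,b}(l-k, m-k)^{\mathbb{C}}$ at $X$. The stationarity condition $Y - A \in \mathbb{N}_Y \Omega_{a,b}(\mathbb{U},\mathbb{W})^{\mathbb{C}}$ therefore collapses to the single block equation $X - A_{33} \in \mathbb{N}_X \Gr_{a,b}(l-k, m-k)^{\mathbb{C}}$, i.e., to $X$ being a stationary point of $\delta_{A_{33}}$ on $\Gr_{a,b}(l-k, m-k)^{\mathbb{C}}$. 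Hence $\iota$ induces a bijection between the two sets of stationary points.

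Finally, because the projection $A \mapsto A_{33}$ is a linear surjection $\Sym^2(\mathbb{R}^n) \to \Sym^2(\mathbb{R}^{m-k})$, a generic $A$ produces a generic $A_{33}$ (in particular one with distinct eigenvalues), so Corollary~\ref{cor:GrEDdeg} then delivers the count $\binom{m-k}{l-k}$. The main point requiring care is verifying that the block-parameterization of Theorem~\ref{thm:Schubert} survives the passage to the complex locus; this is automatic from the polynomial nature of the defining equations, after which the whole reduction is routine block-matrix algebra with no need to confront the subtleties (as in Theorem~\ref{thm:EDdegree}) that complex symmetric matrices are not always diagonalizable by complex orthogonal matrices.
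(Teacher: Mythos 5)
Your proposal is correct and follows essentially the same route as the paper: reduce by orthogonal invariance to $Q=I_n$, use the block decomposition $\Omega_{a,b}(\mathbb{U},\mathbb{W}) \cong \{aI_k\}\times\{bI_{n-m}\}\times \Gr_{a,b}(l-k,m-k)\times\{0\}$, and invoke Corollary~\ref{cor:GrEDdeg}. The only cosmetic difference is that you phrase the reduction through the tangent/normal space splitting, whereas the paper derives the equivalent additive identity $\delta_A(Y) = c + \delta_{A_{33}}(X)$ directly from the block-wise Frobenius norm.
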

\begin{proof}
For any $A \in \mathsf{S}^2(\mathbb{R}^n)$ and $Y \in \Omega_{a,b}(\mathbb{U},\mathbb{W})$,
\[
\delta_A(Y) = \delta_{Q^\tp A Q} \left( \begin{bmatrix}
aI_k & 0 &0\\
0 & bI_{n-m} &0\\
0 & 0 & X
\end{bmatrix} 
\right). 
\]
So we may assume that $Q = I_n$ and thus
\begin{equation}\label{eq:Oab}
\Omega_{a,b}(\mathbb{U},\mathbb{W}) = \left\lbrace 
\begin{bmatrix}
aI_k & 0 &0\\
0 & bI_{n-m} &0\\
0 & 0 & X
\end{bmatrix} \in \Gr_{a,b}(l, n) :  X \in \Gr_{a,b}(l-k, m - k)
\right\rbrace.
\end{equation}
The ambient space of symmetric matrices decomposes as $3 \times 3$ block matrices into
\[
 \mathsf{S}^{2}(\mathbb{R}^n) \cong  \mathsf{S}^{2}(\mathbb{R}^k) \oplus \mathsf{S}^{2}(\mathbb{R}^{n-m}) \oplus \mathsf{S}^{2}(\mathbb{R}^{m-k}) \oplus \bigl[ \mathbb{R}^{k \times (n-m)} \oplus \mathbb{R}^{k \times (m-k)} \oplus \mathbb{R}^{(n-m) \times (m-k)} \bigr]
\]
where the terms in the bracket are the spaces of the $(1,2)$, $(1,3)$, $(2,3)$ blocks. This gives a corresponding decomposition
\[
\Omega_{a,b} (\mathbb{U},\mathbb{W})  \cong  \{a I_k\} \times \{b I_{n-m}\} \times \Gr_{a,b}(l-k,  m -k) \times  \{0 \}
\]
where the last $\{0\}$ represents the zeros in the $(1,2)$, $(1,3)$, $(2,3)$ blocks in \eqref{eq:Oab} collectively.
For any $A \in \mathsf{S}^2(\mathbb{R}^n)$ partitioned into $3 \times 3$ blocks, we also have
\[
\left\lVert \begin{bmatrix} 
A_{11} & A_{12} & A_{13}\\ 
A_{12}^\tp & A_{22}  &  A_{23} \\
A_{13}^\tp & A_{23}^\tp  &  A_{33}
\end{bmatrix} \right\rVert^2 
= \lVert A_{11} \rVert^2 + 2 \lVert A_{12} \rVert^2 + 2\lVert A_{13} \rVert^2 + 
\lVert A_{22} \rVert^2 + 2 \lVert A_{23} \rVert^2 + \lVert A_{33} \rVert^2
\]
and thus
\[
\delta_A(Y) = c + \delta_{A_{33}}(X) 
\]
for some constant $c$. Hence $\ED\bigl(\Omega_{a,b} (\mathbb{U},  \mathbb{W})\bigr) = \ED \bigl(\Gr_{a,b}(l-k, m -k) \bigr) = \binom{m-k}{l-k}$ by Corollary~\ref{cor:GrEDdeg}. 
\end{proof}

We may also obtain an explicit form for the stationary points as before. In the following proposition, all quantities are as in Theorem~\ref{thm:Schubert}.
\begin{proposition}\label{thm:Schubert}
Let $A \in \mathsf{S}^2(\mathbb{R}^n)$ be such that
\[
Q^\tp A Q = \begin{bmatrix} 
B_{11} & B_{12} & B_{13}\\ 
B_{12}^\tp & B_{22}  &  B_{23} \\
B_{13}^\tp & B_{23}^\tp  &  B_{33}
\end{bmatrix} 
\]
and $B_{33} \in \mathbb{R}^{(m-l) \times (m-l)}$ has distinct eigenvalues. Let $B_{33} = V D V^\tp$, $V \in \O_{m-l}(\mathbb{R})$ be the unique eigenvalue decomposition. Then $Y \in \Omega_{a,b} (\mathbb{U},\mathbb{W})$ is a stationary point of $\delta_A$ if and only if 
\[
Y = Q \begin{bmatrix}
I_{n + k -m} & 0 \\
0 & V P_\tau 
\end{bmatrix} \begin{bmatrix}
aI_k & 0 & 0 & 0\\
0 & b I_{n-m} & 0 & 0\\
0 & 0 & a I_{l-k} & 0  \\
0 & 0 & 0 & b I_{m-l} 
\end{bmatrix} 
\begin{bmatrix}
I_{n+k -m} & 0 \\
0 & P_\tau ^\tp V^\tp
\end{bmatrix} Q^\tp,
\]
where $P_\tau $ is the permutation matrix associated with $\tau \in \mathfrak{S}_{m- l}$. The $\binom{m-k}{l-k}$ distinct stationary points of $\delta_A : \Omega_{a,b} (\mathbb{U},\mathbb{W})^{\mathbb{C}} \to \mathbb{C}$ are all contained in $\Omega_{a,b} (\mathbb{U},\mathbb{W})$.
\end{proposition}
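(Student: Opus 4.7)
The plan is to combine the reduction already performed in the proof of Corollary~\ref{cor:ED Omega} with the explicit form of stationary points on a quadratic Grassmannian model supplied by Corollary~\ref{cor:GrStationary}. First I would use the orthogonal invariance $\delta_A(Y) = \delta_{Q^\tp A Q}(Q^\tp Y Q)$, together with the fact that conjugating $\Omega_{a,b}(\mathbb{U},\mathbb{W})$ by $Q^\tp$ moves $\mathbb{U}$ and $\mathbb{W}^\perp$ into coordinate position, to reduce to the case $Q = I_n$. In this case $A = [B_{ij}]$ in the $3 \times 3$ block form with block sizes $k$, $n-m$, $m-k$, and the computation at the end of the proof of Corollary~\ref{cor:ED Omega} gives $\delta_A(Y) = c + \delta_{B_{33}}(X)$ for a constant $c$ independent of $X$, where $X$ is the $(3,3)$-block of $Y$ and ranges over $\Gr_{a,b}(l-k, m-k)$.

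Consequently $Y$ is a stationary point of $\delta_A$ on $\Omega_{a,b}(\mathbb{U},\mathbb{W})$ if and only if its bottom-right block $X$ is a stationary point of $\delta_{B_{33}}$ on $\Gr_{a,b}(l-k, m-k)$. Since $B_{33}$ is a real symmetric matrix with distinct eigenvalues and $B_{33} = V D V^\tp$ is its eigendecomposition, Corollary~\ref{cor:GrStationary} applied to $\Gr_{a,b}(l-k, m-k)$ produces exactly $\binom{m-k}{l-k}$ stationary points, given explicitly by $X = V P_\tau \diag(aI_{l-k}, bI_{m-l}) P_\tau^\tp V^\tp$ as $\tau$ runs over a set of coset representatives for $\mathfrak{S}_{l-k} \times \mathfrak{S}_{m-l}$ in $\mathfrak{S}_{m-k}$. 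Moreover, the second half of Corollary~\ref{cor:GrStationary} (inherited from Corollary~\ref{cor:flag}) ensures that every stationary point of $\delta_{B_{33}}$ on the \emph{complex} locus $\Gr_{a,b}(l-k, m-k)^{\mathbb{C}}$ is already one of these real points, which is precisely the reality assertion of the proposition.

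To arrive at the formula in the statement I would substitute $X = V P_\tau \diag(aI_{l-k}, bI_{m-l}) P_\tau^\tp V^\tp$ into the $(3,3)$-slot of $Y$ and then conjugate back by $Q$; a routine block multiplication shows that this agrees with the factorization
\[
Y = Q \begin{bmatrix} I_{n+k-m} & 0 \\ 0 & V P_\tau \end{bmatrix}
\diag(aI_k,\, bI_{n-m},\, aI_{l-k},\, bI_{m-l})
\begin{bmatrix} I_{n+k-m} & 0 \\ 0 & P_\tau^\tp V^\tp \end{bmatrix} Q^\tp
\]
displayed in the proposition, because the blocks $aI_k$ and $bI_{n-m}$ are fixed by the $I_{n+k-m}$ factor. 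There is no substantive obstacle here: the real work---identifying the Schubert stratum as a product of a fixed pair of blocks with a lower-dimensional Grassmannian, and characterizing the stationary points on that Grassmannian explicitly---was already completed in Corollary~\ref{cor:ED Omega} and Corollary~\ref{cor:GrStationary}. The only points requiring care are keeping the block sizes consistent (in particular, $V$ and $P_\tau$ act on the $(m-k)$-dimensional factor where $\Gr_{a,b}(l-k, m-k)$ lives) and confirming that reality of all complex stationary points is inherited unchanged from the Grassmannian case.
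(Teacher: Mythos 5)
Your proposal is correct and follows essentially the same route as the paper's proof: reduce to $Q = I_n$, use the splitting $\delta_A(Y) = c + \delta_{B_{33}}(X)$ from the proof of Corollary~\ref{cor:ED Omega} to identify stationary points of $\delta_A$ on the Schubert variety with stationary points of $\delta_{B_{33}}$ on $\Gr_{a,b}(l-k,m-k)$, and then invoke Corollary~\ref{cor:GrStationary}. Your remark that $V$ and $P_\tau$ act on the $(m-k)$-dimensional block, with $\tau$ ranging over coset representatives of $\mathfrak{S}_{l-k}\times\mathfrak{S}_{m-l}$ in $\mathfrak{S}_{m-k}$, is a correct (and welcome) clarification of the block sizes appearing in the statement.
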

\begin{proof}
As in the proof of Corollary~\ref{cor:ED Omega},  we may assume $Q = I_n$ so that $
\Omega_{a,b} (\mathbb{U},\mathbb{W}) \cong \{a I_k\} \times \{b I_{n-m}\} \times \Gr_{a,b}(l-k,  m -k) \times  \{0 \}$.  Thus a stationary point of $\delta_A$ on $\Omega_{a,b}(\mathbb{U},\mathbb{W})$ must take the form $\diag(a I_k,  bI_{n-m},  X)$ where $X \in \Gr_{a,b}(l-k,  m -k)$ is a stationary point of $\delta_{B_{33}} : \Gr_{a,b}(l-k,  m -k) \to \mathbb{R}$. Now we simply invoke Corollary~\ref{cor:GrStationary} to see that $X$ is given by \eqref{eq:geig}.
\end{proof}

For completeness, we state the following analog of Corollary~\ref{cor:bestflag}. Note that it includes the nearest point on a Grassmannian as the special case $\Omega_{a,b}(\{ 0\}, \mathbb{R}^n) = \Gr_{a,b}(k,n)$. Again, all quantities are as in Theorem~\ref{thm:Schubert}.
\begin{corollary}[Nearest point on a Schubert variety]\label{cor:bestGr}
Let $A\in \Sym^2(\mathbb{R}^n)$ as in Proposition~\ref{thm:Schubert} and $a > b$. Then the best approximation problem for $\Omega_{a,b}(\mathbb{U}, \mathbb{W})$,
\begin{align*}
\operatorname{minimize} \quad & \lVert A - X \rVert \\
\operatorname{subject~to} \quad & X \in \Omega_{a,b}(\mathbb{U}, \mathbb{W}),
\end{align*}
has the unique solution
\[
Q \begin{bmatrix}
I_{n + k -m} & 0 \\
0 & V 
\end{bmatrix} \begin{bmatrix}
aI_k & 0 & 0 & 0\\
0 & b I_{n-m} & 0 & 0\\
0 & 0 & a I_{l-k} & 0  \\
0 & 0 & 0 & b I_{m-l} 
\end{bmatrix} 
\begin{bmatrix}
I_{n+k -m} & 0 \\
0 & V^\tp
\end{bmatrix} Q^\tp
\]
where $Q, V$ are as in Proposition~\ref{thm:Schubert} except that we choose $V$ so that the eigenvalue decomposition $B_{33} = V \diag(\lambda_1,\dots,  \lambda_{m-l}) V^\tp$ has eigenvalues in decreasing order $\lambda_1 > \cdots > \lambda_{m-l}$.
\end{corollary}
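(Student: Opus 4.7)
The plan is to piece together two facts already established in the paper: the additive splitting of $\delta_A$ from the proof of Corollary~\ref{cor:ED Omega}, and the rearrangement-based identification of the global minimum from Corollary~\ref{cor:bestflag}. I would first conjugate by $Q$ to reduce to the case $Q = I_n$, so that $A$ itself takes the block form with $(3,3)$-block $B_{33}$ and $\Omega_{a,b}(\mathbb{U},\mathbb{W})$ takes the explicit shape in \eqref{eq:Oab}. This is a harmless change since $\lVert Q^\tp(A-Y)Q\rVert = \lVert A-Y\rVert$.

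Next, I would invoke the decomposition from the proof of Corollary~\ref{cor:ED Omega}: for any $Y \in \Omega_{a,b}(\mathbb{U},\mathbb{W})$ corresponding to $X \in \Gr_{a,b}(l-k,m-k)$, one has
\[
\delta_A(Y) = c + \delta_{B_{33}}(X),
\]
where $c$ depends only on the fixed $(1,1)$, $(2,2)$, and off-diagonal blocks of $A$ and on $aI_k, bI_{n-m}$. Consequently the best approximation problem for $\Omega_{a,b}(\mathbb{U},\mathbb{W})$ is equivalent to the best approximation problem for $\Gr_{a,b}(l-k,m-k)$ with target $B_{33}$.

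Now I would apply Corollary~\ref{cor:bestflag} to this reduced Grassmannian problem (the $p=1$ case, with the two scalars $a > b$). Since $B_{33}$ has distinct eigenvalues, its eigendecomposition $B_{33} = V\diag(\lambda_1,\dots,\lambda_{m-k})V^\tp$ with $\lambda_1 > \cdots > \lambda_{m-k}$ is essentially unique, and Corollary~\ref{cor:bestflag} gives the unique minimizer
\[
X^{\star} = V \diag(aI_{l-k},\, bI_{m-l}) V^\tp.
\]
Substituting $X^{\star}$ back into the parametrization \eqref{eq:OmAB} from Theorem~\ref{thm:Schubert} and undoing the initial conjugation by $Q$ yields exactly the displayed formula in the statement; in the language of Proposition~\ref{thm:Schubert} this corresponds to the stationary point indexed by $\tau = \operatorname{id}$.

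The only real work is bookkeeping the four block sizes $k,\ n-m,\ l-k,\ m-l$ consistently through the conjugations, so that the assembled answer matches the claimed matrix. There is no substantive analytical obstacle: the uniqueness of the minimizer follows, as in Corollary~\ref{cor:bestflag}, from the strict inequalities $a > b$ and $\lambda_1 > \cdots > \lambda_{m-k}$, which rule out any nontrivial permutation among the stationary points enumerated by Proposition~\ref{thm:Schubert}.
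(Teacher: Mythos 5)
Your proof is correct and follows essentially the same route as the paper's: reduce via the block splitting $\delta_A(Y) = c + \delta_{B_{33}}(X)$ to the nearest-point problem on $\Gr_{a,b}(l-k,\, m-k)$ and select the stationary point corresponding to $\tau = \operatorname{id}$, which is optimal by the rearrangement argument since $a>b$ and the eigenvalues of $B_{33}$ are decreasing. (Your bookkeeping with $B_{33}$ of size $m-k$ is the consistent choice; the paper's $(m-l)\times(m-l)$ in Proposition~\ref{thm:Schubert} appears to be a typo.)
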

\begin{proof}
As in the proof of Proposition~\ref{thm:Schubert}, the best approximation problem is equivalent to 
\begin{align*}
\operatorname{minimize} \quad & \lVert \diag(\lambda_1,\dots,  \lambda_{m-l}) - P_{\tau} \diag(a I_{l-k},  b_{m-l}) P_{\tau}^\tp \rVert \\
\operatorname{subject~to} \quad & \tau \in \mathfrak{S}_{m-k}.
\end{align*}
Since $\lambda_1 > \cdots > \lambda_{m-l}$ and $a > b$, the unique solution is given by $\tau = \operatorname{id}$.
\end{proof}
It is easy to check that the solution for $p=1$ in Corollary~\ref{cor:bestflag} agrees with the solution for $\mathbb{U} = \{0\}$, $\mathbb{W} = \mathbb{R}^n$ in Corollary~\ref{cor:bestGr} since both reduces to $\Gr_{a,b}(k,n)$.

\section{Euclidean distance degree of Stiefel manifold}\label{sec:St}

We now consider the Cholesky model of the Stiefel manifold $\V_{\!B}(k,n)$ in \eqref{it:chol}. Again we will see that its Euclidean distance degree is independent of the model parameter $B \in \Sym^2_\pp (\mathbb{R}^k)$.
\begin{proposition}\label{prop:Stiefel real stationary points}
Let  $B = Q  \diag(b_1,  \dots,  b_k) Q^\tp $ be the eigenvalue decomposition of $B \in \Sym^2_\pp (\mathbb{R}^k)$ with $Q \in \O_k(\mathbb{R})$ and $b_1 ,\dots, b_k > 0$. Let $A \in \mathbb{R}^{n \times k}$ be generic and
\[
A Q  = U \begin{bmatrix}
\Sigma \\
0
\end{bmatrix} V^\tp, \quad U \in \O_n(\mathbb{R}),  \quad V \in \O_k(\mathbb{R}), \quad \Sigma = \diag(a_1,\dots,  a_k),
\]
be its singular value decomposition. Then $X \in \V_{\!B}(k,n)$ is a stationary point of the Euclidean distance function $\delta_A$ if and only if 
\begin{equation}\label{eq:seig}
X = U 
 \begin{bmatrix}
\varepsilon_1 \sqrt{b_1} & \cdots & 0 \\
\vdots & \ddots & \vdots \\
0 & \cdots & \varepsilon_k \sqrt{b_k} \\
0 & \cdots  & 0 \\
\vdots & \ddots & \vdots \\
0 & \cdots & 0
\end{bmatrix}
V^\tp Q^\tp ,\qquad \varepsilon_1,\dots,  \varepsilon_k \in \{-1,1\}.
\end{equation}
The number of distinct stationary points of $\delta_A$ is $2^k$.
\end{proposition}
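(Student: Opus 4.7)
The plan is to use the polar-type bijection $X \leftrightarrow P := XB^{-1/2}$ between $\V_B(k,n)$ and the standard Stiefel manifold $\V(k,\mathbb{R}^n) = \{P \in \mathbb{R}^{n\times k} : P^\tp P = I_k\}$, which converts $\delta_A$ into a linear functional on $\V(k,\mathbb{R}^n)$ whose stationary points are amenable to a classical Lagrangian analysis.

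First I would rewrite the objective. Under the substitution $X = PB^{1/2}$ one has
\[
\delta_A(X) = \tfrac12\lVert PB^{1/2} - A\rVert^2 = \tfrac12\tr(B) - \tr(C^\tp P) + \tfrac12\lVert A\rVert^2
\]
with $C := AB^{1/2}$, so stationary points of $\delta_A$ on $\V_B(k,n)$ are in bijection with stationary points of the linear functional $P \mapsto \tr(C^\tp P)$ on $\V(k,\mathbb{R}^n)$. A short computation (differentiating $P^\tp P = I_k$) gives $\mathbb{N}_P \V(k,\mathbb{R}^n) = \{PS : S \in \Sym^2(\mathbb{R}^k)\}$, so the stationary condition reads $C = P\Lambda$ for some $\Lambda \in \Sym^2(\mathbb{R}^k)$. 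Left-multiplying by $P^\tp$ and using $P^\tp P = I_k$ yields $\Lambda^2 = C^\tp C = B^{1/2}A^\tp AB^{1/2}$.

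Next I would count symmetric square roots. For generic $A$, the positive-definite matrix $M := B^{1/2}A^\tp AB^{1/2}$ has $k$ distinct positive eigenvalues; writing its spectral decomposition $M = \Phi\diag(\mu_1,\ldots,\mu_k)\Phi^\tp$, the symmetric square roots of $M$ are exactly
\[
\Lambda_\varepsilon = \Phi\diag(\varepsilon_1\sqrt{\mu_1},\ldots,\varepsilon_k\sqrt{\mu_k})\Phi^\tp, \qquad \varepsilon \in \{-1,1\}^k,
\]
giving $2^k$ solutions. Each $\Lambda_\varepsilon$ is invertible, and one checks that $P_\varepsilon := C\Lambda_\varepsilon^{-1}$ satisfies $P_\varepsilon^\tp P_\varepsilon = \Lambda_\varepsilon^{-1} M \Lambda_\varepsilon^{-1} = I_k$, so $P_\varepsilon \in \V(k,\mathbb{R}^n)$ and $X_\varepsilon := P_\varepsilon B^{1/2}$ is the corresponding stationary point in $\V_B(k,n)$. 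Distinct sign vectors yield distinct $\Lambda_\varepsilon$ and hence distinct $X_\varepsilon$, since $C$ has full column rank for generic $A$.

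The main remaining task --- matching $X_\varepsilon$ to the explicit closed form in \eqref{eq:seig} --- is where the bookkeeping is concentrated. Writing $B = QDQ^\tp$ and $AQ = U\begin{bmatrix}\Sigma\\0\end{bmatrix}V^\tp$ from the data of the proposition, one computes $M = QD^{1/2}V\Sigma^2 V^\tp D^{1/2}Q^\tp$ and tracks the sign choices through $X_\varepsilon = C\Lambda_\varepsilon^{-1}B^{1/2}$ to recover a formula with $\varepsilon_1,\ldots,\varepsilon_k$ as sign parameters. The conceptual core of the argument is nevertheless very clean: the count $2^k$ follows from the classical fact that a positive-definite matrix with distinct eigenvalues admits exactly $2^k$ symmetric square roots, one per sign pattern, and genericity of $A$ supplies the distinct-eigenvalues hypothesis --- without it one would inherit a continuous family of square roots, echoing the eigenvalue-multiplicity remark after Proposition~\ref{prop:real stationary points}.
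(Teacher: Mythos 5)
Your reduction $X = PB^{1/2}$ to the standard Stiefel manifold, the stationarity condition $C = P\Lambda$ with $C = AB^{1/2}$ and $\Lambda \in \Sym^2(\mathbb{R}^k)$, and the count of symmetric square roots of $M = C^\tp C$ are all correct, and this route to the number $2^k$ is clean --- genuinely different from the paper's, which normalizes $A$ and $B$ by two-sided orthogonal transformations and solves $A = X(Z + I_k)$ directly in block form. The genuine gap is the step you defer as ``bookkeeping.'' Your derivation produces
\[
X_\varepsilon \;=\; C\Lambda_\varepsilon^{-1}B^{1/2} \;=\; \widetilde{U}\begin{bmatrix}\diag(\varepsilon_1,\dots,\varepsilon_k)\\ 0\end{bmatrix}\widetilde{V}^\tp B^{1/2},
\]
where $AB^{1/2} = \widetilde{U}\begin{bsmallmatrix}\widetilde{\Sigma}\\ 0\end{bsmallmatrix}\widetilde{V}^\tp$ is the SVD of $AB^{1/2}$, whereas \eqref{eq:seig} is built from the SVD of $AQ$. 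No amount of sign-tracking reconciles these: the matrix in \eqref{eq:seig} satisfies $X^\tp X = QV\diag(b_1,\dots,b_k)V^\tp Q^\tp$, which equals $B = Q\diag(b_1,\dots,b_k)Q^\tp$ only when $V$ commutes with $\diag(b_1,\dots,b_k)$ --- true when $B$ is a scalar matrix (the $\O_n(\mathbb{R})$ and sphere cases) but false generically. For instance, with $n=k=2$, $B=\diag(1,4)$, $A=\tfrac{1}{\sqrt2}\begin{bsmallmatrix}2&2\\-1&1\end{bsmallmatrix}$, the right singular vector matrix $V$ of $AQ=A$ does not commute with $\diag(1,4)$, and the matrices \eqref{eq:seig} do not even lie in $\V_{\!B}(2,2)$.

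So your argument is incomplete exactly where you located the remaining work, and it cannot be completed as a proof of \eqref{eq:seig} as written. What your (sound) analysis actually exposes is that the closed form in the statement is misstated; the paper's own proof commits the corresponding error by assuming one may take $A = \begin{bsmallmatrix}\Sigma\\0\end{bsmallmatrix}$ and $B = D$ simultaneously, even though the right multiplication by $V$ that diagonalizes $AQ$ conjugates $D$ to $V^\tp D V$. The fix is to state and prove your own closed form $X_\varepsilon = \widetilde{U}\begin{bsmallmatrix}\diag(\varepsilon_i)\\0\end{bsmallmatrix}\widetilde{V}^\tp B^{1/2}$ in terms of the SVD of $AB^{1/2}$ (one checks directly that these $2^k$ matrices lie in $\V_{\!B}(k,n)$ and satisfy $A - X_\varepsilon \in \mathbb{N}_{X_\varepsilon}\V_{\!B}(k,n)$), rather than to chase \eqref{eq:seig}. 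The headline count $2^k$, and hence the Euclidean distance degree, survives unchanged.
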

\begin{proof}
Let  $D = \diag(b_1,  \dots,  b_k)$. Since
\[
\V_{\!B} (k,n) = \V_{Q D Q^\tp }(k,n) =  \V_D(k,n) Q^\tp , \qquad \delta_{U^\tp A V} (X) = \delta_A (U X V^\tp)
\]
for $Q, V \in \O_k(\mathbb{R})$ and $U \in \O_n(\mathbb{R})$,  we may assume that
\begin{equation}\label{eq:AB}
A = \begin{bmatrix}
\Sigma \\
0 
\end{bmatrix} =  \begin{bmatrix}
a_1 & \cdots & 0 \\
\vdots & \ddots & \vdots \\
0 & \cdots & a_k \\
0 & \cdots  & 0 \\
\vdots & \ddots & \vdots \\
0 & \cdots & 0
\end{bmatrix}
, \qquad B = D = \begin{bmatrix}
b_1 & \cdots & 0 \\
\vdots & \ddots & \vdots \\
0 & \cdots & b_k 
\end{bmatrix}.
\end{equation}
Since $A$ is generic, we may further assume that $a_1 > \dots > a_k > 0$ and $\sqrt{b_i} a_i \ne \sqrt{\smash[b]{b_j}} a_j$ for all $i \ne j$, $i,j \in \{1,\dots, k\}$.

The tangent space at $X \in \V_{\!B}(k,n)$ is
\[
\mathbb{T}_X \V_{\!B}(k,n) = \{Y \in \mathbb{R}^{n \times k} : X^\tp Y + Y^\tp X = 0 \}
\]
and so the normal space is
\[
\mathbb{N}_X \V_{\!B}(k,n) = \{X Z \in \mathbb{R}^{n\times k} : Z \in \Sym^2( \mathbb{R}^k) \}.
\]
Hence $X \in \V_{\!B} (k,n)$ is a stationary point of $\delta_A$ if and only if $A - X  = XZ$ for some $Z \in \Sym^2(\mathbb{R}^k)$.  If we partition
\[
X = \begin{bmatrix}
X_1 \\
X_2 
\end{bmatrix}, \quad X_1 \in \mathbb{R}^{k \times k}, \quad X_2 \in \mathbb{R}^{(n-k) \times k},
\]
then
\[
\begin{bmatrix}
\Sigma \\ 
0
\end{bmatrix} = \begin{bmatrix}
X_1 (Z+I_k) \\
X_2 (Z + I_k)
\end{bmatrix},
\]
from which we obtain $X_2(Z + I_k) = 0$ and $\Sigma = X_1 (Z + I_k)$.  Since $\Sigma$ is invertible,  both $X_1$ and $Z + I_k$ must also be invertible, implying that $X_2 = 0$ and $X_1 = \Sigma (Z + I_k)^{-1}$.  Since $X \in \V_{\!B}(k,n)$, we have
\[
B = X^\tp X = X_1^\tp X_1 = (Z + I_k)^{-1} \Sigma^2  (Z + I_k)^{-1},
\]
and so
\[
I_k = [\Sigma  (Z + I_k)^{-1} B^{-1/2}  ]^\tp [ \Sigma  (Z + I_k)^{-1} B^{-1/2}  ].
\]
It follows that $  (Z + I_k)^{-1} = \Sigma^{-1} W  B^{1/2}$ for some $W \in \O_k(\mathbb{R})$.  As $Z + I_k \in \Sym^2( \mathbb{R}^k)$,  we must have $C^{-1} W  C = W^\tp$ where $C \coloneqq B^{1/2} \Sigma = \diag(\sqrt{b_1} a_1,\dots,  \sqrt{b_k} a_k)$. Write $c_i = \sqrt{b_i} a_i$,  $i=1,\dots, k$, where we may also assume $c_1 > \dots > c_k$ without loss of generality.  Since
\[
W = W^{-\tp} = C^{-1} W^{-1} C = C^{-1} W^\tp  C = C^{-2} W C^2,
\]
we obtain $(1 - c_i^{-2} c_j^2) w_{ij} = 0$ for any $ i,  j = 1,\dots, k$, implying that $w_{ij} = 0$ for all $i \ne j$ and $w_{ii}^2 = 1$ for all $i$.  Hence $X_1 = W B^{1/2} = \diag(\pm \sqrt{b_1},\dots,  \pm \sqrt{b_k})$, as required for \eqref{eq:seig}.
\end{proof}
We may now deduce the required Euclidean distance degree. For the flag manifold, the proof of Proposition~\ref{prop:real stationary points} for real stationary points requires some tweaking for the complex case in Theorem~\ref{thm:EDdegree}. Fortunately for the Stiefel manifold, the proof of  Proposition~\ref{prop:Stiefel real stationary points}  applies verbatim to the complex case.
\begin{theorem}\label{thm:Stiefl EDdegree}
Let $\V_{\!B}(k,n)$  be as in \eqref{it:chol}. Then
\[
\ED\bigl(\V_{\!B}(k,n)\bigr) = 2^k.
\]
\end{theorem}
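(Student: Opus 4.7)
The plan is to rerun the proof of Proposition~\ref{prop:Stiefel real stationary points} verbatim, working throughout with $\V_{\!B}(k,n)^{\mathbb{C}} = \{X \in \mathbb{C}^{n\times k} : X^\tp X = B\}$ instead of $\V_{\!B}(k,n)$, and to verify that at every step the argument survives the passage from $\mathbb{R}$ to $\mathbb{C}$. Since Proposition~\ref{prop:Stiefel real stationary points} already exhibits $2^k$ distinct real stationary points, all of which lie in $\V_{\!B}(k,n)^{\mathbb{C}}$, we immediately have the lower bound $\ED(\V_{\!B}(k,n)) \ge 2^k$; the task is to prove the matching upper bound.

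For the reduction, note that $A \in \mathbb{R}^{n \times k}$ is real and generic, so its SVD is carried out by real orthogonal matrices $U \in \O_n(\mathbb{R})$ and $V \in \O_k(\mathbb{R})$, and the identity $\delta_{U^\tp A V}(X) = \delta_A(UXV^\tp)$ holds for $X \in \mathbb{C}^{n\times k}$ as well because $\lVert \cdot \rVert^2$ is invariant under real orthogonal change of coordinates. We may therefore assume that $A$ and $B$ take the diagonal form in \eqref{eq:AB}, with $a_1 > \cdots > a_k > 0$, $b_i > 0$, and $\sqrt{b_i}\,a_i \ne \sqrt{\smash[b]{b_j}}\,a_j$ for $i\ne j$. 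The normal space at a point $X \in \V_{\!B}(k,n)^{\mathbb{C}}$ is $\mathbb{N}_X\V_{\!B}(k,n)^{\mathbb{C}} = \{XZ : Z \in \Sym^2(\mathbb{C}^k)\}$, an identity whose derivation from the defining equation $X^\tp X = B$ is purely formal and therefore valid over $\mathbb{C}$. Hence $X$ is stationary for $\delta_A$ iff $A - X = XZ$ for some $Z \in \Sym^2(\mathbb{C}^k)$, and partitioning $X = \begin{bsmallmatrix} X_1 \\ X_2 \end{bsmallmatrix}$ yields $X_2(Z+I_k)=0$ and $\Sigma = X_1(Z+I_k)$; invertibility of $\Sigma$ forces $X_2 = 0$ and $X_1 = \Sigma(Z+I_k)^{-1}$ exactly as before.

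The remaining step is the one to scrutinize. From $X_1^\tp X_1 = B$ and the existence of the positive real square root $B^{1/2}$ (valid since $B$ is a positive diagonal real matrix), we obtain $(Z+I_k)^{-1} = \Sigma^{-1} W B^{1/2}$ for some $W$ with $W^\tp W = I_k$, that is, $W \in \O_k(\mathbb{C})$. Symmetry of $Z + I_k$ forces $C^{-1}WC = W^\tp$ with $C = B^{1/2}\Sigma = \diag(c_1,\dots,c_k)$, and combining this with $W^\tp = W^{-1}$ yields $W = C^{-2}WC^2$. Entrywise this reads $(1 - c_i^{-2}c_j^2)w_{ij} = 0$, so off-diagonal entries vanish by the genericity assumption on $A$, while the diagonal entries satisfy $w_{ii}^2 = 1$. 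In $\mathbb{C}$ the equation $w_{ii}^2 = 1$ still has exactly the two solutions $w_{ii} = \pm 1$, so the complex stationary points coincide with the $2^k$ real ones catalogued in \eqref{eq:seig}.

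The only place where one might worry about complex phenomena creating extra stationary points is the passage from $W^\tp W = I_k$ to $w_{ii}^2 = 1$: a priori $W \in \O_k(\mathbb{C})$ is a much larger group than $\O_k(\mathbb{R})$ and admits nondiagonalizable elements. The crucial point is that the commutation relation $C^{-2}WC^2 = W$ together with distinctness of the $c_i^2$ collapses $W$ to a diagonal matrix \emph{before} we invoke orthogonality, so the two branches $w_{ii} = \pm 1$ are all that survive. Combined with Proposition~\ref{prop:Stiefel real stationary points}, this establishes $\ED(\V_{\!B}(k,n)) = 2^k$ and, as a bonus, shows that all complex stationary points are in fact real.
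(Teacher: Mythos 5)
Your proposal is correct and follows exactly the paper's strategy: the paper's own proof of Theorem~\ref{thm:Stiefl EDdegree} simply asserts that the argument of Proposition~\ref{prop:Stiefel real stationary points} carries over verbatim to the complex locus, and you have supplied the step-by-step verification of that claim, correctly isolating the one genuinely delicate point (that $W \in \O_k(\mathbb{C})$ is forced to be diagonal by the relation $W = C^{-2}WC^2$ and the distinctness of the $c_i^2$, after which $w_{ii}^2 = 1$ still has only two solutions over $\mathbb{C}$). No gaps.
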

\begin{proof}
Let $A \in \mathbb{R}^{n \times k}$ be generic. We now repeat the proof of Proposition~\ref{prop:Stiefel real stationary points}   with  the complex locus
\[
\V_{\!B}(k,n)^{\mathbb{C}} \coloneqq \{ X \in \mathbb{C}^{n \times k} : X^\tp X = B \}
\]
in place of  $\V_{\!B}(k,n)$. 
Note that $A$ and $B$ both remain real matrices as in the proof of Proposition~\ref{prop:Stiefel real stationary points}. The only difference here is that $X$ is allowed to be a complex matrix. So we may assume that $A$ and $B$ have the forms in \eqref{eq:AB}. It remains to observe that the rest of the proof of Proposition~\ref{prop:Stiefel real stationary points} carries over verbatim with $\mathbb{C}$ in place of $\mathbb{R}$, leading to the required conclusion that $\delta_A$ has exactly $2^k$ stationary points in $\V_{\!B}(k,n)^{\mathbb{C}}$.
\end{proof}

As we mentioned in Section~\ref{sec:ed} the special case with $n = k$ and $B= I$ gives $\V_I (n,n) = \O_n(\mathbb{R})$, which is a classical example in \cite[Example~4.12]{DLO17}. Theorem~\ref{thm:Stiefl EDdegree} confirms that $\ED(\O_n(\mathbb{R})) = 2^n$. Another special case with $k =1$ and  $B=1$ gives  $\V_1 (1,n) = \mathbb{S}^{n-1}$, a hypersurface in $\mathbb{R}^n$ and thus \cite[Corollary~2.12]{Lee17} applies to give $\ED(\mathbb{S}^{n-1}) = 2$, which agrees with the value given by Theorem~\ref{thm:Stiefl EDdegree}.

A comparison of Proposition~\ref{prop:Stiefel real stationary points} and Theorem~\ref{thm:Stiefl EDdegree} yields the following analogue of  Corollary~\ref{cor:flag}.
\begin{corollary}\label{cor:st}
For a generic $A\in \mathbb{R}^{n \times k}$, the stationary points of the Euclidean distance function $\delta_A : \V_{\!B}(k,n)^{\mathbb{C}} \to \mathbb{C}$ are all contained in $\V_{\!B}(k,n)$.
\end{corollary}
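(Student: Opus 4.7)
The plan is to deduce this directly by comparing the real and complex counts already established. Proposition~\ref{prop:Stiefel real stationary points} produces $2^k$ explicit stationary points of $\delta_A$ lying in the real Stiefel manifold $\V_{\!B}(k,n)$, one for each sign vector $(\varepsilon_1,\dots,\varepsilon_k)\in\{-1,1\}^k$. On the other hand, Theorem~\ref{thm:Stiefl EDdegree} asserts that the total number of stationary points in the complex locus $\V_{\!B}(k,n)^{\mathbb{C}}$ is exactly $2^k$. Since $\V_{\!B}(k,n)\subseteq\V_{\!B}(k,n)^{\mathbb{C}}$ and the real stationary points of Proposition~\ref{prop:Stiefel real stationary points} are genuine complex stationary points, the $2^k$ real stationary points must already exhaust the complex ones.

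To make this rigorous, I would first verify that the $2^k$ points in \eqref{eq:seig} are pairwise distinct, which is immediate because the genericity hypothesis forces $b_1,\dots,b_k>0$ and the singular values $a_1,\dots,a_k$ to be distinct and nonzero, so different sign vectors yield different matrices $X$. Then each such $X$ is a stationary point of $\delta_A$ viewed as a function $\V_{\!B}(k,n)^{\mathbb{C}}\to\mathbb{C}$, since the stationarity condition $A-X\in\mathbb{N}_X\V_{\!B}(k,n)^{\mathbb{C}}$ is a polynomial identity that holds on $\V_{\!B}(k,n)$ and therefore on any real point of the complex locus. Combining this with $\ED(\V_{\!B}(k,n))=2^k$ from Theorem~\ref{thm:Stiefl EDdegree} forces equality of the two sets of stationary points.

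There is no real obstacle here; the work has already been done in Proposition~\ref{prop:Stiefel real stationary points} and Theorem~\ref{thm:Stiefl EDdegree}. The only mild subtlety is the first step above, namely confirming that the explicit real stationary points continue to be stationary when the manifold is enlarged to its complex locus. This is straightforward because the normal space description $\mathbb{N}_X\V_{\!B}(k,n)^{\mathbb{C}}=\{XZ:Z\in\Sym^2(\mathbb{C}^k)\}$ is obtained from its real counterpart by the same polynomial equations, so the stationarity certificate $Z\in\Sym^2(\mathbb{R}^k)\subseteq\Sym^2(\mathbb{C}^k)$ used in Proposition~\ref{prop:Stiefel real stationary points} remains valid over $\mathbb{C}$. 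Hence all $2^k$ stationary points in $\V_{\!B}(k,n)^{\mathbb{C}}$ lie in $\V_{\!B}(k,n)$, as claimed.
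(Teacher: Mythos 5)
Your proposal is correct and follows essentially the same route as the paper, which derives the corollary precisely by comparing the $2^k$ explicit real stationary points of Proposition~\ref{prop:Stiefel real stationary points} with the count $\ED(\V_{\!B}(k,n))=2^k$ of Theorem~\ref{thm:Stiefl EDdegree}. Your added checks (pairwise distinctness of the points in \eqref{eq:seig}, which in fact needs only $b_i>0$ rather than distinctness of the $a_i$, and the persistence of the stationarity certificate $Z\in\Sym^2(\mathbb{R}^k)\subseteq\Sym^2(\mathbb{C}^k)$ over the complex locus) are exactly the right details to make the comparison rigorous.
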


Lastly, we provide the analog of Corollaries~\ref{cor:bestflag} and \ref{cor:bestGr}.
\begin{corollary}[Nearest point on a Stiefel manifold]\label{cor:bestSt}
Let $A\in  \mathbb{R}^{n \times k}$ be generic and $b_1 > \cdots > b_k > 0$. Then the best approximation problem for $\V_{\!B}(k,n)$,
\begin{align*}
\operatorname{minimize} \quad & \lVert A - X \rVert \\
\operatorname{subject~to} \quad & X \in \V_{\!B}(k,n)
\end{align*}
has the unique solution
\[
U 
 \begin{bmatrix}
\sqrt{b_1} & \cdots & 0 \\
\vdots & \ddots & \vdots \\
0 & \cdots & \sqrt{b_k} \\
0 & \cdots  & 0 \\
\vdots & \ddots & \vdots \\
0 & \cdots & 0
\end{bmatrix}
V^\tp Q^\tp
\]
where $U,V,Q$ are as in Proposition~\ref{prop:Stiefel real stationary points}.
\end{corollary}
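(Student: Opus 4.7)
The plan is to follow the template of Corollaries~\ref{cor:bestflag} and \ref{cor:bestGr}: reduce the continuous minimization over $\V_{\!B}(k,n)$ to a discrete minimization over the $2^k$ stationary points classified in Proposition~\ref{prop:Stiefel real stationary points}, and then identify the unique minimizer among them. First I would observe that $\V_{\!B}(k,n)$ is compact---it is closed in $\mathbb{R}^{n\times k}$, and every $X \in \V_{\!B}(k,n)$ satisfies $\lVert X\rVert^2 = \tr(X^\tp X) = \tr(B)$---so a global minimizer of $\lVert A-X\rVert$ exists. Any such minimizer is a stationary point of $\delta_A$, so by Proposition~\ref{prop:Stiefel real stationary points} it must equal one of the $2^k$ matrices
\[
X_\varepsilon \coloneqq U\begin{bmatrix}\diag(\varepsilon_1\sqrt{b_1},\dots,\varepsilon_k\sqrt{b_k})\\ 0\end{bmatrix}V^\tp Q^\tp, \qquad \varepsilon=(\varepsilon_1,\dots,\varepsilon_k)\in\{-1,1\}^k.
\]

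Next I would apply orthogonal invariance of the Frobenius norm, multiplying by $U^\tp$ on the left and by $QV$ on the right of $A - X_\varepsilon$. Using $U^\tp AQV = \begin{bsmallmatrix}\Sigma\\0\end{bsmallmatrix}$ and $U^\tp X_\varepsilon QV = \begin{bsmallmatrix}\diag(\varepsilon_i\sqrt{b_i})\\0\end{bsmallmatrix}$, the best-approximation problem reduces to
\[
\operatorname{minimize}\ \sum_{i=1}^k (a_i - \varepsilon_i\sqrt{b_i})^2 \qquad \operatorname{subject~to}\ \varepsilon \in \{-1,1\}^k.
\]

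Finally I would expand the objective as $\sum_i a_i^2 + \sum_i b_i - 2\sum_i \varepsilon_i a_i\sqrt{b_i}$; since the first two sums are constant, the problem reduces to maximizing $\sum_i \varepsilon_i a_i\sqrt{b_i}$. Genericity of $A$ ensures the singular values satisfy $a_1>\dots>a_k>0$, and $\sqrt{b_i}>0$ by positive definiteness of $B$, so the unique maximum is attained at $\varepsilon_i=1$ for every $i$, yielding the closed form in the statement. No step here poses a genuine obstacle; the only point to handle carefully is applying orthogonal invariance with distinct left and right actions (unlike the conjugation used in Corollaries~\ref{cor:bestflag} and \ref{cor:bestGr}), and invoking generic strict positivity of the $a_i$ to guarantee that the optimal sign pattern is unique rather than ambiguous.
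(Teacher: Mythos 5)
Your proposal is correct and follows essentially the same route as the paper: reduce via Proposition~\ref{prop:Stiefel real stationary points} and orthogonal invariance to a discrete minimization of $\sum_{i=1}^k (a_i - \varepsilon_i\sqrt{b_i})^2$ over sign patterns, then pick $\varepsilon_i = 1$ for all $i$. The extra details you supply (compactness for existence of a minimizer, and the observation that what is really needed is $a_i > 0$ rather than the ordering of the $a_i$) are sound and, if anything, slightly sharpen the paper's one-line justification.
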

\begin{proof}
By Proposition~\ref{prop:Stiefel real stationary points}, the best approximation problem is equivalent to 
\begin{align*}
\operatorname{minimize} \quad & \lVert \diag\! \begin{pmatrix} a_1,\dots,  a_k\end{pmatrix} - \diag\!\begin{pmatrix}\varepsilon_1 \sqrt{b_1},\dots,  \varepsilon_k \sqrt{b_k}\end{pmatrix}\rVert \\
\operatorname{subject~to} \quad & \varepsilon_1,\dots,  \varepsilon_k \in \{-1,1\}.
\end{align*}
Since $a_1 > \cdots > a_k$ and $b_1 > \cdots > b_k > 0$, the unique solution is given by $\varepsilon_1 = \dots = \varepsilon_k = 1$.
\end{proof}

\section{Conclusion}

We hope that these results on Euclidean distance degrees would shed further light on the computational complexity of manifold optimization problems, complementing the study in \cite{ZLK24}.  Although there is as yet no definitive link between the tractability of general optimization problems over $\mathcal{M}$ and $\ED(\mathcal{M})$, evidence suggests that the latter reflects some facets of the former. So the revelation that the Euclidean distance degrees of the flag, Grassmann, and Stiefel manifolds are multinomial coefficients and powers of two---generally large numbers---may be taken as another indication that optimization problems over these manifolds are difficult, attesting to the NP-hardness results in \cite{ZLK24}. We caution the reader that this does not mean that global minimization of the Euclidean distance function $\delta_A$ for a generic $A$ is computationally intractable. As Corollaries~\ref{cor:bestflag},  \ref{cor:bestGr}, and \ref{cor:bestSt} show, for the manifolds considered in this article, their global minimizers of $\delta_A$ have simple closed-form expressions computable to arbitrary accuracy in polynomial time.  On the other hand, any linear manifold $\mathcal{M}$ automatically has $\ED(\mathcal{M}) = 1$ but the global minimization of $\delta_A$ for the ``two-sided product'' manifold $\mathcal{M} = \{X \in \mathbb{R}^{m \times n} : F X G = H \} $ for fixed $F \in \mathbb{R}^{p \times m}$, $G \in \mathbb{R}^{n \times q}$, $H \in \mathbb{R}^{p \times q}$ is considerably more involved \cite{LL07,  LL23}.

We end this article with two open problems for the readers: Determine the Euclidean distance degree for the symplectic Grassmannian in \cite[Equation 20]{LXK25} and for a general Schubert variety that does not take the form in Definition~\ref{def:Schu}. The former is already expressed in \cite[Equation 20]{LXK25} as a submanifold embedded in $\mathbb{R}^{n \times n}$; for the latter, one would need to first find a quadratic model for an arbitrary Schubert variety similar to \eqref{eq:OmAB}.

\bibliographystyle{abbrv}

\begin{thebibliography}{10}

\bibitem{AMS}
P.-A. Absil, R.~Mahony, and R.~Sepulchre.
\newblock {\em Optimization algorithms on matrix manifolds}.
\newblock Princeton University Press, Princeton, NJ, 2008.

\bibitem{AH}
P.~Aluffi and C.~Harris.
\newblock The {E}uclidean distance degree of smooth complex projective
  varieties.
\newblock {\em Algebra Number Theory}, 12(8):2005--2032, 2018.

\bibitem{BD1}
J.~A. Baaijens and J.~Draisma.
\newblock Euclidean distance degrees of real algebraic groups.
\newblock {\em Linear Algebra Appl.}, 467:174--187, 2015.

\bibitem{BD2}
A.~Bik and J.~Draisma.
\newblock A note on {ED} degrees of group-stable subvarieties in polar
  representations.
\newblock {\em Israel J. Math.}, 228(1):353--377, 2018.

\bibitem{BSW}
P.~Breiding, F.~Sottile, and J.~Woodcock.
\newblock Euclidean distance degree and mixed volume.
\newblock {\em Found. Comput. Math.}, 22(6):1743--1765, 2022.

\bibitem{optim1}
P.~Breiding and N.~Vannieuwenhoven.
\newblock The condition number of {R}iemannian approximation problems.
\newblock {\em SIAM J. Optim.}, 31(1):1049--1077, 2021.

\bibitem{optim2}
D.~Cifuentes, C.~Harris, and B.~Sturmfels.
\newblock The geometry of {SDP}-exactness in quadratic optimization.
\newblock {\em Math. Program.}, 182(1-2):399--428, 2020.

\bibitem{optim3}
C.~Ding, D.~Sun, J.~Sun, and K.-C. Toh.
\newblock Spectral operators of matrices: semismoothness and characterizations
  of the generalized {J}acobian.
\newblock {\em SIAM J. Optim.}, 30(1):630--659, 2020.

\bibitem{DHOST14}
J.~Draisma, E.~Horobe\c{t}, G.~Ottaviani, B.~Sturmfels, and R.~Thomas.
\newblock The {E}uclidean distance degree.
\newblock In {\em S{NC} 2014---{P}roceedings of the 2014 {S}ymposium on
  {S}ymbolic-{N}umeric {C}omputation}, pages 9--16. ACM, New York, 2014.

\bibitem{DHOST16}
J.~Draisma, E.~Horobe\c{t}, G.~Ottaviani, B.~Sturmfels, and R.~R. Thomas.
\newblock The {E}uclidean distance degree of an algebraic variety.
\newblock {\em Found. Comput. Math.}, 16(1):99--149, 2016.

\bibitem{DLO17}
D.~Drusvyatskiy, H.-L. Lee, G.~Ottaviani, and R.~R. Thomas.
\newblock The {E}uclidean distance degree of orthogonally invariant matrix
  varieties.
\newblock {\em Israel J. Math.}, 221(1):291--316, 2017.

\bibitem{EAS}
A.~Edelman, T.~A. Arias, and S.~T. Smith.
\newblock The geometry of algorithms with orthogonality constraints.
\newblock {\em SIAM J. Matrix Anal. Appl.}, 20(2):303--353, 1999.

\bibitem{Fulton97}
W.~Fulton.
\newblock {\em Young tableaux}, volume~35 of {\em London Mathematical Society
  Student Texts}.
\newblock Cambridge University Press, Cambridge, 1997.

\bibitem{Gantmacher59}
F.~R. Gantmacher.
\newblock {\em Applications of the theory of matrices}.
\newblock Interscience Publishers, New York, NY, 1959.

\bibitem{GH94}
P.~Griffiths and J.~Harris.
\newblock {\em Principles of algebraic geometry}.
\newblock Wiley Classics Library. John Wiley \& Sons, Inc., New York, 1994.

\bibitem{optim4}
D.~Henrion, S.~Naldi, and M.~Safey El~Din.
\newblock Exact algorithms for linear matrix inequalities.
\newblock {\em SIAM J. Optim.}, 26(4):2512--2539, 2016.

\bibitem{CDM25}
C.~Joita, D.~Siersma, and M.~Tibar.
\newblock Euclidean distance discriminants and {M}orse attractors, 2025.

\bibitem{KMK24}
A.~Khanna et~al.
\newblock Single-neuronal elements of speech production in humans.
\newblock {\em Nature}, 626:603–610, 2024.

\bibitem{Kleiman76}
S.~L. Kleiman.
\newblock Problem 15: rigorous foundation of {S}chubert's enumerative calculus.
\newblock In {\em Mathematical developments arising from {H}ilbert problems},
  volume Vol. XXVIII of {\em Proc. Sympos. Pure Math.}, pages 445--482. Amer.
  Math. Soc., Providence, RI, 1976.

\bibitem{ZLK24}
Z.~Lai, L.-H. Lim, and K.~Ye.
\newblock Grassmannian optimization is {NP}-hard.
\newblock {\em arXiv:2406.19377}, 2024.

\bibitem{Lee17}
H.~Lee.
\newblock The {E}uclidean distance degree of {F}ermat hypersurfaces.
\newblock {\em J. Symbolic Comput.}, 80:502--510, 2017.

\bibitem{LZFYY24}
X.~Li, T.~Zhou, X.~Feng, S.-T. Yau, and S.~S.-T. Yau.
\newblock Exploring geometry of genome space via {G}rassmann manifolds.
\newblock {\em Innovation}, 5(5):100677, 2024.

\bibitem{LL23}
Z.~Li and L.-H. Lim.
\newblock Generalized matrix nearness problems.
\newblock {\em SIAM J. Matrix Anal. Appl.}, 44(4):1709--1730, 2023.

\bibitem{LL07}
A.~Liao and Y.~Lei.
\newblock Optimal approximate solution of the matrix equation {$AXB=C$} over
  symmetric matrices.
\newblock {\em J. Comput. Math.}, 25(5):543--552, 2007.

\bibitem{LXK25}
L.-H. Lim, X.~Lu, and K.~Ye.
\newblock Special orthogonal, special unitary, and symplectic groups as
  products of {G}rassmannians.
\newblock {\em arXiv:2501.18172}, 2025.

\bibitem{LK24a}
L.-H. Lim and K.~Ye.
\newblock Simple matrix models for the flag, {G}rassmann, and {S}tiefel
  manifolds.
\newblock {\em arXiv:2407.13482}, 2024.

\bibitem{KN23}
K.~Z. Lin and N.~R. Zhang.
\newblock Quantifying common and distinct information in single-cell multimodal
  data with tilted canonical correlation analysis.
\newblock {\em Proc. Natl. Acad. Sci. USA}, 120(32):e2303647120, 2023.

\bibitem{Manivel01}
L.~Manivel.
\newblock {\em Symmetric functions, {S}chubert polynomials and degeneracy
  loci}, volume~6 of {\em SMF/AMS Texts and Monographs}.
\newblock American Mathematical Society, Providence, RI; Soci\'et\'e
  Math\'ematique de France, Paris, 2001.

\bibitem{MT24}
L.~Maxim and M.~Tib\u{a}r.
\newblock Euclidean distance degree and limit points in a {M}orsification.
\newblock {\em Adv. in Appl. Math.}, 152:Paper No. 102597, 20, 2024.

\bibitem{MRW1}
L.~G. Maxim, J.~I. Rodriguez, and B.~Wang.
\newblock Defect of {E}uclidean distance degree.
\newblock {\em Adv. in Appl. Math.}, 121:102101, 22, 2020.

\bibitem{MRW2}
L.~G. Maxim, J.~I. Rodriguez, and B.~Wang.
\newblock Euclidean distance degree of the multiview variety.
\newblock {\em SIAM J. Appl. Algebra Geom.}, 4(1):28--48, 2020.

\bibitem{MRW3}
L.~G. Maxim, J.~I. Rodriguez, and B.~Wang.
\newblock Euclidean distance degree of projective varieties.
\newblock {\em Int. Math. Res. Not. IMRN}, (20):15788--15802, 2021.

\bibitem{Milnor}
J.~Milnor.
\newblock {\em Morse theory}, volume No. 51 of {\em Annals of Mathematics
  Studies}.
\newblock Princeton University Press, Princeton, NJ, 1963.

\bibitem{OSV}
G.~Ottaviani, L.~Sodomaco, and E.~Ventura.
\newblock Asymptotics of degrees and {ED} degrees of {S}egre products.
\newblock {\em Adv. in Appl. Math.}, 130:Paper No. 102242, 36, 2021.

\bibitem{Schubert79}
H.~Schubert.
\newblock {\em Kalk\"ul der abz\"ahlenden {G}eometrie}.
\newblock Springer-Verlag, Berlin-New York, 1979.

\bibitem{optim5}
P.-J. Spaenlehauer.
\newblock On the complexity of computing critical points with {G}r\"obner
  bases.
\newblock {\em SIAM J. Optim.}, 24(3):1382--1401, 2014.

\bibitem{YL16}
K.~Ye and L.-H. Lim.
\newblock Schubert varieties and distances between subspaces of different
  dimensions.
\newblock {\em SIAM J. Matrix Anal. Appl.}, 37(3):1176--1197, 2016.

\end{thebibliography}

\end{document}